\patchcmd{\subsection}{\scshape}{\bf}{}{}
\newcommand{\lb}{\label}
\newcommand{\nn}{\nonumber}
\newcommand{\sumin}{\sum_{i=1}^{n}}
\newcommand{\be}{\begin{equation}}
\newcommand{\ee}{\end{equation}}
\newcommand{\beqa}{\begin{eqnarray*}}
\newcommand{\eeqa}{\end{eqnarray*}}
\newcommand{\beqn}{\begin{eqnarray}}
\newcommand{\eeqn}{\end{eqnarray}}
\newcommand{\ba}{\begin{array}}
\newcommand{\ea}{\end{array}}
\newcommand{\bc}{\begin{center}}
\newcommand{\ec}{\end{center}}
\newcommand{\btab}{\begin{tabular}}
\newcommand{\etab}{\end{tabular}}
\newcommand{\kn}{K_n}
\newcommand{\kz}{K_0}
\newcommand{\kns}{K_n^*}
\newcommand{\kzs}{K_0^*}
\newcommand{\dmu}{d \mu(\theta)}
\newcommand{\dnu}{d \nu(x)}
\newcommand{\mn}{m_n}
\newcommand{\fiit}{f_{i-1} (\theta)}
\newcommand{\mix}{m_i(x)}
\newcommand{\miixi}{m_{i-1}(X_i)}
\newcommand{\mxi}{m(X_i)}
\newcommand{\miix}{m_{i-1}(x)}
\newcommand{\Fii}{\mathcal{F}_{i-1}}
\newcommand{\pxit}{p(X_i | \theta)}
\newcommand{\pxt}{p(x | \theta)}
\DeclareMathOperator{\E}{E}
\newtheorem{rmrk}{\indent\sc Remark}
\numberwithin{equation}{section}
\theoremstyle{plain}
\newtheorem{thm}{Theorem}[section]
\newtheorem{lemma}{\indent\sc Lemma}
\newtheorem{proposition}{\indent\sc Proposition}
\begin{document}

\begin{frontmatter}
\title{Stochastic approximation algorithm for estimating mixing distribution for dependent observations}

\begin{aug}
\author{\fnms{Nilabja } \snm{Guha}\thanksref{m1}\ead[label=e1]{nilabja$\_$guha@uml.edu}},
\author{\fnms{Anindya} \snm{Roy}\thanksref{m2}\ead[label=e2]{anindya@umbc.edu}}
%
\runauthor{Guha and Roy}

\affiliation{Deaprtment of Mathematical Sciences, University of Massachusetts Lowell \thanksmark{m1};   Department of Mathematics and Statistics, University of Maryland Baltimore County \thanksmark{m2}}
%
%
\end{aug}

\begin{abstract}
Estimating the mixing density of a mixture distribution remains an interesting problem in statistics.  Using a stochastic approximation  method,  \citet{newton1999recursive} introduced a  fast recursive algorithm for  estimating  the  mixing  density of a mixture.   Under suitably chosen weights the stochastic approximation estimator converges to the true solution.    In \citet{tokdar2009consistency} the consistency of this recursive estimation method was established. However the current results  of consistency of the resulting recursive estimator use independence among observations as an assumption. We extend the investigation of performance of Newton's algorithm  to several dependent scenarios. We  prove that the original algorithm under certain conditions remains consistent even when the observations are arising from a weakly dependent stationary process with the target mixture as the marginal density.  We show consistency under a decay condition on the dependence among observations when the dependence is characterized by a quantity similar to mutual information between the observations.\end{abstract}

\end{frontmatter}
\thispagestyle{plain} 

\section{Introduction}
Stochastic approximation (SA) algorithms which are stochastic optimization techniques based on recursive update, have many applications in optimization problems arising in different fields such as  engineering and  machine learning.  A classical and pioneering example of an SA can be found in  \citet{robbins1951stochastic} where a recursive method is introduced for finding the root of a function. In particular, suppose we have a non-increasing   $h$, where values of $h$ are observed with errors, i.e., we observe the value of $h$ at a point $x_n$, as $y_n=h(x_n)+{\epsilon}_n$.  Here ${\epsilon}_n$'s are i.i.d errors with mean zero and finite variance. The Robins--Monro stochastic algorithm recursively approximate the solution of $h(x)=\alpha_0$ as 
\be
x_{n+1}=x_n+w_n(y_n-\alpha_0),
\lb{SAC}
\ee
where $w_n$  is a sequence of weights satisfying $w_n>0, {\sum}_n w_n=\infty \text{ and }$ $ {\sum}_n {w_n}^2<\infty$.
Under  \eqref{SAC}, the sequence $\{x_n\}$ approaches  the true root $x_0$, such that $h(x_0)=\alpha_0$. Subsequent developments include rate of convergence, optimum step-size, convergence under convexity etc (see  \citet{chung1954stochastic}, \citet{fabian1968asymptotic},  \citet{polyak1992acceleration}  ).  Also see \citet{sharia2014truncated},\citet{kushner2010stochastic}   for recent developments such as adaptively truncating the solution to some domain. More generally, for a predictable sequence $R_t(Z_t)$ with $R_t(z_0)=0; z_0\subset \mathbb{R}^m$ the recursive solution is a  limit to the recursion
\begin{eqnarray}
Z_t=Z_{t-1}+\gamma_t(Z_{t-1})[R_t(Z_{(t-1)})+\epsilon_t(Z_t)]; t=1,2,\dots
\lb{rm_multi}
\end{eqnarray}
where $Z_0\subset \mathbb{R}^m$ is the initial value, $\gamma_t(Z_{t-1})$ possibly state dependent matrix of step sizes, and $\epsilon_t(Z_t)$ is a mean zero random process.
The idea of stochastic approximation was cleverly used in a predictive recursion (PR) algorithm in \citet{newton1999recursive}   for  finding the mixing component of a mixture distribution.  

Mixture models are popular statistical models that provide a nice compromise between the flexibility offered by nonparametric approaches and the efficiency of  parametric  approaches. Mixture models are increasingly used in messy data situations where adequate fit is not obtained using conventional parametric models. Many algorithms,  mostly variants of expectation maximization  (EM) algorithm and Markov chain Monte Carlo (MCMC) algorithms, are currently available for fitting mixture models to the data. Specifically, these algorithms would fit a marginal model of the form
\be 
{m}_f =\int_{\Theta}p(x|\theta)dF(\theta) .
\lb{marginal}
\ee 
to the data $X_1, \ldots,X_n$ assuming a form of the mixing kernel $p(x | \theta).$  A related problem is recovering (estimating) the mixing distribution $F$. The problem of estimating the density $f$ of $F$ (with respect to some dominating measure $\mu$) in a nonparametric set up  can be a challenging exercise and can be numerically taxing when full likelihood procedures (such as nonparametric MLE or nonparametric Bayesian) are used. However the full likelihood procedures generally enjoy desirable large sample properties, such as consistency. 

Let $X\in \chi$ be a random variable with distribution $p(x|\theta)$, where $ \theta \in \Theta $ is the latent variable and $f(\theta)$ be the mixing density. Let $\nu$ and $\mu$ be the sigma-finite measures associated with $\chi$ and $\Theta$. 
The recursive estimation algorithm  \citet{newton1999recursive}  for estimating $f(\theta)$ then starts with some initial $f_0(\theta)$ which has the same support as the true mixing density $f(\theta)$.
The update with each new observation $X_i$ is given by
\be 
f_i(\theta)=(1-w_i)f_{i-1}(\theta)+w_i \frac{p(X_i|\theta)f_{i-1}(\theta)}{\miixi} 
\lb{newton}
\ee 	
where $\miix = \int_{\Theta}p(x|\theta) \fiit d\mu(\theta)$ is the marginal density of $X$ at the $i$th iteration based on the mixing density $f_{i-1}$ obtained at the previous iteration. 

From equation \eqref{newton} calculating the marginals on both sides, we have the associated iterations for the marginal densities as 
\be
\mix = \miix\big\{1+w_i(\frac{\int \pxt \pxit \fiit  \dmu }{\miix\miixi}-1)\big\}.
\lb{ntnmrg}
\ee
One way to connect the PR update to SA algorithm is to  minimize the Kullback--Leibler (KL) distance between the proposed marginal and the true marginal and considering the minimizer of the corresponding Lagrange multiplier (\citet{ghosh2006convergence}; \citet{martin2008stochastic}).
The support of $f_0$ can be misspecified. In such cases PR estimate can be shown to concentrate around a density in the space of proposed mixture densities that is nearest in KL distance to the true marginal (\citet{martin2009asymptotic}).  Similar results can be found  for general posterior consistency such as developments in \citet{kleijn2006misspecification} where posterior consistency under misspecification was shown under conditions like convexity.

The PR algorithm is computationally much faster than the full likelihood methods.  \citet{ghosh2006convergence},  and later \citet{tokdar2009consistency}   established the consistency of Newton's predictive recursion algorithm, thereby putting the PR algorithm  on solid theoretical footing. Some subsequent developments on the rate of convergence  can be found in \citet{martin2008stochastic}, \citet{martin2009asymptotic}. Later developments focus on application on semiparametric model, predictive distribution calculation in \citet{hahn2018recursive}, \citet{martin2016semiparametric}.  Like the proof of original SA algorithm the proving the consistency for the recursive estimate of the mixture distribution depends on a Martingale difference sum construction.
Because the algorithm depends on the order of $X_i$'s, the resulting estimator is not a function of sufficient statistics, the consistency of the PR solution cannot be drawn from the consistency of frequentist and Bayesian  method (e.g using DP mixture model; see \citet{ghosal1999posterior}, \citet{lijoi2005consistency} ).  In \citet{ghosh2006convergence} and \citet{tokdar2009consistency} the  martingale based method from \citet{robbins1951stochastic} was adapted in density estimation setting in a novel way to  show the almost sure convergence of the estimator in the weak topology and Kullback--Leibler (KL) divergence. The PR solution was shown to be the Kullback--Leibler divergence minimizer between the true marginal and the set of proposed marginals. 

One of the key assumptions in the existing proof of consistency is that the observations are independent. This assumptions significantly limits the scope of application for the PR algorithm, where some naturally occurring dependence may be present, for example cases of mixture of Markov processes. The main result of this paper is that  the predictive recursion continues to provide consistent solution even under weakly dependent stationary processes
as long as the dependence decays reasonably fast. This vanishing dependence can be connected with and can be quantified by  the information theoretic quantities such as  mutual information between the marginal and the conditional densities of the process. We use a novel sub-sequence argument to tackle the dependence among the observations and prove consistency of the PR algorithm when such vanishing dependence is present.  At the same time we derive a bound for the convergence rate for the PR algorithm under such dependence. 
As a special case, we later consider the example of general $M$ dependent cases, where the consistency of the recursive estimator holds under weaker conditions. 
In all the cases we also  investigate  convergence under misspecification of the support of the mixing density and concentration around the KL projection to the misspecified model.

The arrangement of this article is the following. In  Section 2, we provide the background and basic framework for the  martingale based argument. Section 3 presents the main results regarding convergence and the rates  for the  weakly dependent cases, and address the special case of mixture of Markov processes.  In Section 4, we consider the special case of $M$- dependent sequences.

\section{Preliminaries and revisiting the independent case} 
Our notation and initial framework will  follow the original martingale type argument of \citet{robbins1951stochastic} and the developments for the independent case  in the literature, especially in   \citet{ghosh2006convergence} and \citet{tokdar2009consistency}. Thus,  we first introduce the notation and revisit the main techniques used in the proof of consistency of the PR estimator in  \citet{tokdar2009consistency}. The discussion will also illustrate the need for generalization of the techniques to the dependent case. 

  A recursive formulation using  KL divergence  along with   martingale based decomposition was used in  \citet{tokdar2009consistency} who established  convergence of the recursive estimators for the mixing density and that of the marginal density. Specifically, if $\kn = \int f\, log(f/f_n) \dmu$ and  $\mathcal{F}_i = \sigma(X_1, \ldots, X_i)$ is the $\sigma-$ algebra generated by the first $i$ observations, then the following recursion can be established: 
\be 
\kn - \kz=\sumin w_i V_i- \sumin w_i M_i + \sumin E_i 
\lb{kl1}
\ee 
where 
\begin{eqnarray*}
M_i&=& -\E \Big{[}1 - \frac{\mxi}{\miixi}|\Fii\Big{]}; m(X_i)=\int p(X_i|\theta)f(\theta)d\mu (\theta),\\
V_i&=&(1-\frac{\mxi}{\miixi}) + M_i, \\
E_i&=& \int R(X_i, \theta) f(\theta) \dmu, \\
R(X_i, \theta)& =& w_i^2(\frac{\pxit}{m_{i-1}(X_i)}-1)^2R(w_i(\frac{\pxit}{m_{i-1}(X_i)}-1)) 
\label{KLdiff}
\end{eqnarray*}
and $R(x)$ is defined through the relation $\log (1 + x) = x - x^2 R(x)$ for $x > -1.$ The remainder term $R$ satisfies  $0\leq R(x)\leq\text{ max}\{1,(\frac{1}{1+x})^2\}/2$. 
The corresponding similar recursion for the KL divergence of the marginal densities, $\kns = \int m\, log(m/\mn) \dnu$, is then
\be
\kns - \kzs =\sumin  w_i V^*_i - \sumin w_i M^*_i + \sumin  E^*_i
\lb{kl2}
\ee 
where
\begin{eqnarray*}
g_{i,x}(\theta)&=&\frac{\pxt \fiit}{\miix},\\
h_{i,x'}(x)&=&\int g_{i,x'}(\theta)\pxt \dmu,\\
R^*(X_i,x)&=&w_i^2(\frac{h_{i,X_i}(x)}{\miix}-1)^2R(w_i[\frac{h_{i,X_i}(x)}{\miix}-1]),\\
V_i^*&=&(1-\int\frac {h_{i,X_i}(x)}{\miix} m(x) \dnu + M_i^*,\\
M_i^*&=&\int_{\chi} \int _{\chi}\frac{{h_{i,x'}(x)}}{m_{i-1}(x)}m(x) m(x^{\prime}) \dnu d \nu(x^{\prime}) - 1,\\
 E_i^*&=&\int_{\chi} R(X_i,x)m(x)\dnu.
\end{eqnarray*}
It is assumed that  $K_0$ and $K_0^*$, corresponding to initial starting point $f_0$, are finite. The main idea in  \citet{tokdar2009consistency} was to recognize that $\sumin w_i V_i$ , $\sumin w_i V_i^*$ are mean zero square integrable  martingales and hence almost surely convergent, $\sumin w_i M_i$, $\sumin w_i M_i^*$ are positive, and $\sumin E_i$, $\sumin E_i^*$ have  finite limits almost surely. Putting these facts together  \citet{tokdar2009consistency} established that $\kns$ necessarily converges to zero limit almost surely, thereby establishing consistency of the predictive recursion sequences $f_n$ and $m_n$ in weak topology.  Using developments in  \citet{robbins1971convergence} it can be argued that the PR solution converges to the KL minimizer if the support is misspecified.

Hence, the theoretical results regarding convergence of such PR algorithms in current literature relies  on independence of $X_i$ and also assumed that the true marginals at each $i$ were  the same and equal to $m(x)$. There does not seem any obvious way of extending the proof to the case when the observations are not independent.

 We propose  a proof for convergence of the predictive recursion algorithm in the case when the observations are dependent. While the proof will use  martingale construction technique similar to that of  \citet{tokdar2009consistency}, there are significant differences in the approach that allows us to address the case of dependent observations, and tools required to address the dependence will be detailed in next section. 
 The main contributions of the paper are summarized in the following: 
\begin{enumerate}
\item
 We  show that the PR algorithm continues to be consistent under weakly dependent  processes where the marginal stationary distribution at each $i$ is a mixture with respect to a fixed kernel. The consistency is obtained under additional conditions on the kernel and the parameter space.
If the dependence decays exponentially, under additional conditions, the original PR estimate is shown to be consistent. The decaying dependence is characterized by a special case of expected $f$ divergence between marginal and conditional densities. 
\item  We establish the   convergence rate when the support is correctly specified and or misspecified with decaying dependence.
\item We show the result for finite mixture of Markov processes and  general $M$-dependent processes under milder conditions.
\end{enumerate}
The next section describes the main results as well the  tools needed for the proof of consistency in the dependent case that uses a novel sub-indexing argument.

\section{Main results}

To establish consistency of the PR algorithm when observations are dependent, we will need to control the 
expectation of  ratios of $k $-fold products ($(k\geq 1)$) of densities  at two different parameter values. Suppose the parameter  $\theta$ lies in a compact subset $\Theta$ of an Euclidean space. Let $\widehat{\Theta}$ denote a closed convex set containing $\Theta$ and assume 
  $p(x|\theta)$ is well defined for $\theta \in \widehat{\Theta}$. Assume there is  a finite set $\Theta_H = \{\theta_j,j=1\dots n_H\} \in \widehat{\Theta},$ (typically will be the extreme points when $\widehat{\Theta}$ is a convex polytope) and a compact set  $\chi_c \subset \chi$ such that the following holds.

\begin{itemize}

\item[]C1.  For any $x \notin \chi_c$   there exists $\theta_1^x,\theta_2^x \in \Theta_H$ such that $\text{sup}_{\theta,\theta'\in \Theta}\{ \frac{p(x | \theta)}{p(x|\theta')}\}\leq c_u\frac{p(x|\theta_2^x)}{p(x | \theta_1^x)}$, for some  $c_u>1$.  This condition  is satisfied  with $c_u=1$  for $\theta^x_l=\underset{\theta} \arg\inf p(x | \theta) \in \Theta_H $ and $\theta^x_u=\underset{\theta} \arg\sup p(x | \theta)\in \Theta_H$.
\vskip 5pt

\item[]C2. There exists $a > 0$ such that $\underset{x\in \chi_c,  \theta \in \Theta} \inf p(x|\theta)>a$.
\vskip 5pt

\item[]C3. There exists $b < \infty$ such that $\underset{x\in \chi_c,  \theta \in \Theta} \sup p(x|\theta)<b$.

\end{itemize}

Without loss of generality, we can assume that $0<a<1<b$. 
Under C1---C3, the ratio of the marginals  can be bounded by the ratios of conditionals on the finite set $\Theta_H$ and a function of $a$ and $b$. For many  common problems,  such as  location mixtures of $p(x|\theta)$, a finite set $\Theta_H$ exists that satisfies the assumption. This condition can be seen as a generalization of Monotone Likelihood Ratio  property in higher dimensions.
The  condition is satisfied  in a  general multivariate normal mean mixture with known covariance matrix, where the mean parameter is constrained to a set $\Theta$  in  $\mathcal{R}^d$,  contained in a closed large convex polytope ${\widehat{\Theta}}$, and the set   $\Theta_H$ then consists of suitable selected points on the boundary of  $\widehat{\Theta}$ and $\chi_c=\widehat{\Theta}$.
\begin{proposition}
Under C1--C3 , for any two distribution $f_1$ and $f_2$ on compact $\Theta$, the ratio of the marginals under mixing densities  $f_1$ and $f_2$ can be bounded as \begin{equation} 
\frac{m_{f_1}(X_i)}{m_{f_2}(X_i)}\leq A_1(X_i)=c_u \sum_{\theta_{k},\theta_l \in \Theta_H}\frac{p(X_i|\theta_k)b}{p(X_i|\theta_l)a}.
\label{chaul}
 \end{equation}
 \label{chaul_bd}
\end{proposition}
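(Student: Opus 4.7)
The plan is a direct case analysis depending on whether $X_i \in \chi_c$ or $X_i \notin \chi_c$, showing in each case that the common bound $A_1(X_i)$ dominates the ratio $m_{f_1}(X_i)/m_{f_2}(X_i)$. The key observation is that $A_1(X_i)$ has been engineered so that its diagonal summands ($\theta_k = \theta_l$) absorb the ``interior'' bound $b/a$ coming from C2--C3, while an appropriate off-diagonal summand absorbs the ``tail'' bound coming from C1.

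For $X_i \notin \chi_c$, I would apply C1 to obtain $\theta_1^{X_i},\theta_2^{X_i} \in \Theta_H$ controlling the pointwise likelihood ratio, and then upgrade the pointwise inequality to a sup/inf inequality:
\[
\sup_{\theta \in \Theta} p(X_i|\theta) \;\leq\; c_u\,\frac{p(X_i|\theta_2^{X_i})}{p(X_i|\theta_1^{X_i})}\,\inf_{\theta' \in \Theta} p(X_i|\theta').
\]
Since $f_1, f_2$ are probability densities on $\Theta$, the trivial sandwich $\inf_\theta p(X_i|\theta) \leq m_{f_j}(X_i) \leq \sup_\theta p(X_i|\theta)$ immediately yields
\[
\frac{m_{f_1}(X_i)}{m_{f_2}(X_i)} \;\leq\; c_u\,\frac{p(X_i|\theta_2^{X_i})}{p(X_i|\theta_1^{X_i})}.
\]
The pair $(\theta_k,\theta_l) = (\theta_2^{X_i},\theta_1^{X_i})$ indexes one of the nonnegative summands of $A_1(X_i)$, with the extra factor $b/a > 1$ only making the bound larger, so the right-hand side is dominated by $A_1(X_i)$.

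For $X_i \in \chi_c$, C2 and C3 give $a < p(X_i|\theta) < b$ uniformly in $\theta \in \Theta$; the same sup/inf sandwich now gives $m_{f_1}(X_i)/m_{f_2}(X_i) \leq b/a$. To finish, I would isolate the $n_H$ diagonal terms $\theta_k = \theta_l$ in the sum defining $A_1(X_i)$, each of which contributes exactly $b/a$, so that $A_1(X_i) \geq c_u\, n_H \cdot b/a \geq b/a$ since $c_u \geq 1$.

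No serious obstacle arises here; essentially everything reduces to the ``design'' of $A_1(X_i)$. The only items requiring a bit of care are positivity of the denominators (which follows from C2 on $\chi_c$ and is implicit in C1 off $\chi_c$, since the stated supremum could not be finite otherwise) and checking that the pointwise inequality from C1 can be promoted to the sup/inf inequality above, which is immediate because the constant in C1 does not depend on $\theta$ or $\theta'$.
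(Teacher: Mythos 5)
Your proposal is correct and follows essentially the same route as the paper's own proof: a case split on $X_i \in \chi_c$ versus $X_i \notin \chi_c$, the sup/inf sandwich $\inf_\theta p(X_i|\theta) \leq m_{f_j}(X_i) \leq \sup_\theta p(X_i|\theta)$, and then C1 (resp.\ C2--C3) to dominate the resulting ratio by a summand of $A_1(X_i)$. Your explicit bookkeeping of which summands of $A_1$ absorb each case is slightly more detailed than the paper's two-line argument, but the content is identical.
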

\begin{proof}
Given in the Appendix. 
\end{proof}
\begin{rmrk}
For finite $\Theta$ the result in Proposition \ref{chaul_bd} holds trivially  using $\Theta$ in place of $\Theta_H$.
\end{rmrk}

When $X_i$'s have a fixed  marginal density $m$,  PR algorithm can still be applied in spite of dependence  if the dependence decreases rapidly  with distance along the sequential order in which  $X_i$'s enter the algorithm. We assume the following $\alpha$-mixing type condition for the $X_i$  sequence. The decaying dependence is expressed as  a special case of expected $f$ divergence  (expected $\chi^2$-distance) between the  marginal and the conditional densities.  

Let $X_{1:i}=\{X_1,\ldots,X_i\}$ and let $m(X_{i+n} |X_{1:i})$ be the conditional density/pmf of $X_{i+n}$ given $X_{1:i}$. We assume, 

\begin{eqnarray}
\hspace{0.2in}\text{sup}_i E\big[\int (\frac{m(X_{i+n} |X_{1:i})}{m(X_{i+n})}-1)^2m(X_{i+n})d(.)\big] \leq c_0^2\rho^{2n}
\lb{strdep}
\end{eqnarray}
where $c_0>0$ and $0 < \rho<1$. 

Let ${\bf H}(X_{i+n},X_{1:i})=\int  \log\frac{m(X_{i+n}, X_{1:i})}{m(X_{i+n})m(X_{1:i})}  m(X_{i+n},X_{1:i})d(\cdot)$ denote the {\it mutual information} between $X_{i+n}$ and $X_{1:i}$ and let ${\bf H}_a(X_{i+n},X_{1:i})=$ $\int  |\log\frac{m(X_{i+n}, X_{1:i})}{m(X_{i+n})m(X_{1:i})}|$  $ m(X_{i+n},X_{1:i})d(\cdot)$. 
The condition given in \eqref{strdep} implies exponential decay of mutual information ${\bf H}(X_{i+n},X_{1:i})$. When the conditional densities are uniformly  bounded and uniformly bounded away from zero, \eqref{strdep} is satisfied if   ${\bf H}_a(X_{i+n},X_{1:i})$ decays exponentially. This result can be  summarized in the following proposition.
\begin{proposition}
The condition \eqref{strdep} implies $\text{sup}_i {\bf H}(X_{i+n},X_{1:i})\leq  c_0^2\rho^{2n}$ where $c_0,\rho$ are given in \eqref{strdep}. If $\text{inf}_{i,n}p(X_{i+n}|X_{1:i})>0$ and $\text{sup}_{i,n}p(X_{i+n}|X_{1:i})<\infty$  then condition \eqref{strdep} holds if  $\text{sup}_i{\bf H}_a(X_{i+n},X_{1:i})\leq c_1\rho^{2n}$, for some $c_1>0$. 
\label{infm_bound}
\end{proposition}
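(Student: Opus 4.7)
The plan is to prove the two implications separately by invoking standard comparison inequalities among the Kullback--Leibler divergence, the chi-squared divergence, and the $L^1$ log-divergence $\mathbf{H}_a$.

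For the first implication, I would rewrite the mutual information as an expected conditional KL divergence,
\[
\mathbf{H}(X_{i+n},X_{1:i}) = E\Bigl[\int \log\frac{m(X_{i+n}|X_{1:i})}{m(X_{i+n})}\, m(X_{i+n}|X_{1:i})\, d(\cdot)\Bigr].
\]
The elementary inequality $\log t \leq t - 1$ applied pointwise to $t = p/q$ yields the classical bound $\int p \log(p/q)\, d\mu \leq \int (p/q - 1)^2 q\, d\mu$. Plugging in $p = m(\cdot|X_{1:i})$ and $q = m(\cdot)$, taking expectation over $X_{1:i}$, and invoking \eqref{strdep} immediately gives $\sup_i \mathbf{H}(X_{i+n},X_{1:i}) \leq c_0^2 \rho^{2n}$.

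For the second implication, set $r = m(X_{i+n}|X_{1:i})/m(X_{i+n})$. Because $m(X_{i+n}) = E[m(X_{i+n}|X_{1:i})]$ is a mixture of the conditionals, it inherits the uniform two-sided bounds, so $r$ takes values in a fixed compact interval $[r_{\min}, r_{\max}] \subset (0,\infty)$ uniformly in $i$ and $n$. A mean value theorem applied to $\log$ between $1$ and $r$ gives $|r - 1| \leq \max(1,r_{\max}) |\log r|$, and since $|\log r|$ is itself bounded on $[r_{\min}, r_{\max}]$, we obtain $(r-1)^2 \leq C_1 |\log r|$ for a constant $C_1$ depending only on $r_{\min}$ and $r_{\max}$. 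Rewriting the chi-squared integrand as $(r - 1)^2 m(X_{i+n}) = (r-1)^2 r^{-1} m(X_{i+n}|X_{1:i})$ and using $r^{-1} \leq 1/r_{\min}$ gives
\[
\int (r-1)^2 m(X_{i+n})\, d(\cdot) \leq \frac{C_1}{r_{\min}} \int |\log r|\, m(X_{i+n}|X_{1:i})\, d(\cdot).
\]
Taking expectation over $X_{1:i}$ identifies the right-hand side with $(C_1/r_{\min})\, \mathbf{H}_a(X_{i+n}, X_{1:i})$, and the assumed exponential decay of $\mathbf{H}_a$ transfers directly to \eqref{strdep} with $c_0^2 = C_1 c_1/r_{\min}$.

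No deep obstacle arises: both parts reduce to divergence comparison inequalities (KL $\leq \chi^2$ always, and $\chi^2 \leq$ const.\ $\cdot\, \mathbf{H}_a$ under two-sided boundedness). The only step that warrants care is the uniform control of the ratio $r$, which hinges on the observation that $m(X_{i+n})$ inherits the two-sided bounds from the conditionals by virtue of being a mixture of them; this uniform boundedness is precisely what allows the $L^1$-log and chi-squared functionals to be compared up to multiplicative constants in the reverse direction needed for the second implication.
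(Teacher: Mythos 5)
Your proof is correct and follows exactly the route the paper gestures at (the paper omits the details, saying only that the result ``follows from the relationship between $f$-divergence and mutual information''): the first part is the standard bound of KL by the $\chi^2$-divergence applied conditionally and averaged over $X_{1:i}$, and the second part reverses the comparison between $\chi^2$ and $\mathbf{H}_a$ using the uniform two-sided bounds on the conditional (and hence marginal) densities. Your write-up supplies the details the paper leaves out, including the one point that genuinely needs care --- that $m(X_{i+n})$ inherits the two-sided bounds as a mixture of the conditionals --- so nothing is missing.
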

\begin{proof}
The result follows from the relationship between $f$-divergence and mutual information and is omitted.
\end{proof}

In addition, we assume the following conditions which are similar to those in  
\citet{tokdar2009consistency}:

\begin{itemize}

\item[B1] $w_i \downarrow 0$, and $w_i \sim i^{-\alpha}$,  $\alpha \in (0.5,1]$.
\item[B2] For  $\theta_{l_1},\theta_{l_2}$'s $ \in \widehat{\Theta}$ for $E[(\prod_{l=1}^{n_1}\frac{p(X_{j_l}|\theta_{l_1})}{p(X_{j_l}|\theta_{l_2})})^2]\leq b_0^{2n_1}\prod_{l=1}^{n_1}E[(\frac{p(X_{j_l}|\theta_{l_1})}{p(X_{j_l}|\theta_{l_2})})^2]$, for some $b_0>0$ and where $j_l 's \in\{1,\dots n\} $ and are distinct,  and $n_1\leq n$. Assume, $b_0\geq 1$ without loss of generality. 
\item[B3]$sup_{\theta_1,\theta_2,\theta_3 \in \widehat{\Theta}} E_{\theta_3}[( \frac{p(X_i|\theta_{1})}{p(X_i|\theta_{2})})^2]<B<\infty$
\item[B4] The map $\theta \to p(x|\theta)$ is bounded and continuous for $ x \in \chi $.

\end{itemize}

Condition $[\rm{B2}]$ is needed for $n_1$ fold products over different indices for the dependent case. This condition will later be verified for some of the examples considered. Condition $[\rm{B2}]$ can be omitted if stricter moment condition $[\rm{B2}']$ is assumed, which is given later.  We can now state our main results. Theorem~\ref{wdependent}  shows consistency when the support of the mixing density is finite while  Theorem~\ref{wdependent2} establishes consistency for general support under slightly more restrictive conditions.

\begin{thm} Let $\{X_i\}$ be a sequence of random variables satisfying \eqref{strdep} where $X_i$ has a fixed marginal density $m(\cdot)$ given by $m(x) = \int_{\Theta} p(x | \theta) f(\theta) d\theta$ and the support of $f$, $\Theta$, is a finite set.
Assume that the initial estimate $f_0$ in \eqref{newton} has the same support as $f$.  Then under \mbox{\rm{B1--B4}}, \mbox{\rm{C1--C3}}, the estimator  $f_n$ in  \eqref{newton} converges to the true mixing density $f$ with probability one, as  the number of observations $n$ goes to infinity. 
\lb{wdependent}
\end{thm}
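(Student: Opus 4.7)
The plan is to follow the KL-recursion martingale strategy of Tokdar et al.~and to introduce a sub-indexing argument to handle dependence. The recursion \eqref{kl1} is derived by purely algebraic manipulations from the PR update \eqref{newton} and therefore remains valid verbatim under dependence, so I begin from
\[
K_n - K_0 \;=\; \sum_{i=1}^n w_i V_i \;-\; \sum_{i=1}^n w_i M_i \;+\; \sum_{i=1}^n E_i.
\]
Since $V_i = A_i - E[A_i\mid\mathcal{F}_{i-1}]$ with $A_i = 1 - m(X_i)/m_{i-1}(X_i)$ is a martingale difference sequence \emph{regardless} of dependence among the $X_i$, the handling of $\sum w_i V_i$ transfers once one has an $L^2$ bound. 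Finiteness of $\Theta$ together with C1--C3 and Proposition \ref{chaul_bd} gives a pathwise control on the ratio $m(X_i)/m_{i-1}(X_i)$, and B3 combined with stationarity of the marginal $m(\cdot)$ bounds $E[V_i^2]$ uniformly in $i$; with $\sum w_i^2<\infty$ from B1 the martingale convergence theorem then yields that $\sum w_iV_i$ converges a.s. The remainder $\sum E_i$ is a.s.~finite by the same type of estimate applied to the bounded remainder $R$.

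The substantive novelty is the drift $\sum w_i M_i$. In the independent case $M_i = \int m^2/m_{i-1}\,d\nu - 1 \geq 0$; under dependence $M_i = \int (m(x)/m_{i-1}(x))\,q(x\mid\mathcal{F}_{i-1})\,d\nu(x) - 1$, and the $\chi^2$-discrepancy between $q(\cdot\mid\mathcal{F}_{i-1})$ and $m(\cdot)$ is only bounded, not small, at lag one, so \eqref{strdep} cannot be invoked directly. To manufacture a usable gap I would define for each $i$ a rolled-back iterate $\tilde f_{i-1,n_0}$ obtained by running the recursion \eqref{newton} with weights $w_j$ set to zero for $j\in\{i-n_0,\dots,i-1\}$, where $n_0 = n_0(i) = \lceil c\log i/\log(1/\rho)\rceil$ with $c > 1-\alpha$. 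Then $\tilde f_{i-1,n_0}$ is $\mathcal{F}_{i-n_0-1}$-measurable, and iterating the bounded PR update on the finite support gives the Lipschitz estimate $\|f_{i-1}-\tilde f_{i-1,n_0}\|_\infty = O(n_0\, w_{i-n_0})$, which transfers to the marginals $m_{i-1}$ and $\tilde m_{i-1,n_0}$. Decompose
\[
M_i \;=\; (M_i-\tilde M_i) \;+\; (\tilde M_i - \tilde M_i^{\mathrm{ind}}) \;+\; \tilde M_i^{\mathrm{ind}},
\]
where $\tilde M_i = E[m(X_i)/\tilde m_{i-1,n_0}(X_i)\mid\mathcal{F}_{i-n_0-1}] - 1$ and $\tilde M_i^{\mathrm{ind}} = \int m^2/\tilde m_{i-1,n_0}\,d\nu - 1 \geq 0$. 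The rollback estimate gives $|M_i-\tilde M_i| = O(n_0 w_{i-n_0})$ pathwise, so $\sum w_i|M_i-\tilde M_i|<\infty$ a.s.~once $\alpha>1/2$; the second piece equals $\int (m/\tilde m_{i-1,n_0})(q(\cdot\mid\mathcal{F}_{i-n_0-1})-m)\,d\nu$, whose $L^1$ norm is bounded via Cauchy--Schwarz and \eqref{strdep} at gap $n_0+1$ by $C\rho^{n_0+1}$, and the choice of $n_0$ makes $\sum w_i|\tilde M_i-\tilde M_i^{\mathrm{ind}}|$ summable in expectation and hence a.s.~finite.

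Substituting back into the recursion and using $K_n\geq 0$, the residual positive drift satisfies $\sum w_i\tilde M_i^{\mathrm{ind}} \leq K_0 + \text{(a.s.\ finite)}$; since $\tilde M_i^{\mathrm{ind}}\geq 0$ and $\sum w_i=\infty$ by B1, this forces $\liminf_i \tilde M_i^{\mathrm{ind}} = 0$ along a subsequence. Because $\tilde M_i^{\mathrm{ind}}$ is a $\chi^2$-like distance between $m$ and $\tilde m_{i-1,n_0}$, this yields $\tilde m_{i-1,n_0}\to m$ pointwise on the finite support along that subsequence; the rollback bound then gives $m_{i-1}\to m$ along the same subsequence, and identifiability of the finite mixture under C1--C3 promotes this to $f_{i-1}\to f$ and $K_{i-1}\to 0$. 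Since $K_n$ has already been shown to converge to a limit, the limit must be zero, yielding a.s.~consistency. The main obstacle I expect is making the rollback Lipschitz estimate precise uniformly in $i$, which requires keeping $\tilde m_{i-1,n_0}$ bounded below uniformly along rollbacks; this is where C2 and the finiteness of $\Theta$ are essential, and the argument uses the finite-dimensional structure of the simplex of mixing probabilities to propagate lower bounds through the iterates.
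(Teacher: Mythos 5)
Your martingale step is fine --- $V_i = A_i - E[A_i\mid\mathcal{F}_{i-1}]$ with $A_i = 1 - m(X_i)/m_{i-1}(X_i)$ remains a martingale difference array under dependence, and the $L^2$ bound via Proposition \ref{chaul_bd} and B3 does give a.s.\ convergence of $\sum w_iV_i$ --- but the treatment of the drift has a genuine gap, and it is precisely the gap that forces the paper into its sub-sequence construction. In your decomposition $M_i = (M_i-\tilde M_i) + (\tilde M_i - \tilde M_i^{\mathrm{ind}}) + \tilde M_i^{\mathrm{ind}}$, the first piece changes two things at once: the iterate ($m_{i-1}\to\tilde m_{i-1,n_0}$) and the conditioning $\sigma$-algebra ($\mathcal{F}_{i-1}\to\mathcal{F}_{i-n_0-1}$). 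The rollback Lipschitz estimate controls only the first change. Writing $q(\cdot\mid\cdot)$ for the conditional density of $X_i$, the second change contributes a term of the form $\int \frac{m(x)}{m_{i-1}(x)}\bigl[q(x\mid\mathcal{F}_{i-1})-q(x\mid\mathcal{F}_{i-n_0-1})\bigr]d\nu(x)$, and by Cauchy--Schwarz and \eqref{strdep} this is only $O(\rho)$ --- a constant governed by the lag-one mixing coefficient, not the lag-$(n_0+1)$ one. Since $\sum w_i=\infty$, the series $\sum w_i|M_i-\tilde M_i|$ need not converge and the argument does not close. The obstruction is structural: as long as the martingale part is centered at $\mathcal{F}_{i-1}$, the drift is an integral against $q(\cdot\mid\mathcal{F}_{i-1})$, whose $\chi^2$-distance from $m$ is merely bounded, and no manipulation of the iterate alone can create the needed gap in the conditioning.

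The paper's remedy is to re-center the martingale differences at a lagged $\sigma$-algebra $F_{last,i}$ whose lag $i-i_{last}$ grows (roughly like $i^{1/K}$), so that the drift becomes $E[m(X_i)/m_{i_{last}}(X_i)-1\mid F_{last,i}]$, i.e.\ a nonnegative $K^*$-type term plus a correction of order $\rho^{\,i-i_{last}}$, which is summable against $w_i$. The price is that $\sum w_iV_i$ is then no longer a single martingale: the indices are partitioned into countably many interleaved sub-sequences, each carrying a genuine square-integrable martingale, and Doob's maximal inequality plus Borel--Cantelli shows only finitely many contribute; a separate correction $S_i^{\Delta}$, expanded as products of the ratios $A_1(X_j)$ over the gap and controlled by the moment condition B2, passes from $m_{i_{last}}$ back to $m_{i-1}$. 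Your rollback idea could in principle be repaired along the same lines --- center at $\mathcal{F}_{i-n_0(i)-1}$ and block the resulting non-martingale sum --- but you would then need exactly this interleaving machinery, and your claimed pathwise $O(n_0 w_{i-n_0})$ bound would also require B2-type control of products of $A_1(X_j)$ over the rollback window, since outside $\chi_c$ the one-step PR update is not uniformly Lipschitz.
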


\begin{proof}
The terms in the decomposition of the KL divergence, \eqref{KLdiff} can no longer be handled in the manner as in the independent case. We use further {\it sub-indexing} of the terms to obtain appropriate convergence results under the dependence condition \eqref{strdep}. 

We first partition the positive natural numbers into sub-sequences. 
Let  $(j)_r$ denote the  $r$th term in $j$th sub-sequence  and let  $\Psi(r,j)$ denote its value. Then the sub-sequences are constructed in the following manner:
\begin{enumerate}
\item
Let  $\Psi(1,1)  = 1$.
\item
Let $\Psi(1,j) = \inf\{\Psi(r,j-1): \Psi(r, j-1) > j^K\} + 1$ for some fixed positive integer $K > 1$. 
\item
The terms in the $(j-1)$th sub-sequence are immediately followed by the next available term in the $j$th sub-sequence unless there are no such terms in which case it is followed by the next term in the first sub-sequence.
\end{enumerate}
By construction,  $j^K\leq (j)_1\leq (j+1)^K$. 
For  convenience of notation, we denote by $i_{last}$ the integer $\Psi(r-1,j)$ whenever $i = \Psi(r,j)$ and $r>1$. We define $i_{last}=i-j$ if $i=\psi(1,j)$. Let $F_{last,i}$ denote the $\sigma$-algebra generated by the collection $\{X_1, \ldots, X_{i_{last}}\}$. 
Similarly, Let, $p_{i,last}$ denote the conditional density of $X_{i}$ given $X_1,\dots, X_{i_{last}}$.
Let  $L_n$ be the number of  subsequences constructed for $i=n$, that is we consider $j$ such that $\Psi(r,j) \leq n$ for some $r\geq 1$. Clearly, $L_n\leq n$. Also,  let $N_{j,n}$ be the number of terms in the $j$th sequence upto $i=n$, such that $\Psi(r,j) \leq n.$ 
As an example, for  $K=2$ the following pattern arises for the sub-sequences\\
\begin{eqnarray*}
\overbrace{(1)_1}^{\tilde{\bf B}_1},\overbrace{(1)_2}^{\tilde{\bf B}_2},\overbrace{(1)_3}^{\tilde{\bf B}_3},\overbrace{(1)_4,(2)_1}^{\tilde{\bf B}_4},\overbrace{ (1)_5,(2)_2}^{\tilde{\bf B}_5},\overbrace{ (1)_6,(2)_3,(3)_1}^{\tilde{\bf B}_6}\dots,\overbrace{(1)_{i_1},(2)_{i_2},\dots, (i+1)_1}^{\tilde{\bf B}_{f(i)}}
\lb{martcon}
\end{eqnarray*}
Thus,  $\Psi(1,2)=5$, $\Psi(1,3) = 10$ $N_{1,10} = 6$ and $L_{10} = 3$ in this example.


From equation \eqref{newton}, similar to equation \eqref{KLdiff}  we have
\begin{eqnarray}
K_n-K_0=\sum_{j=1}^{{L_n}} S_{v,j}^{(n)}-\sum_{j=1}^{L_n} S_{M,j}^{(n)}+\sum_{i=1}^nS^{\Delta}_i+\sum_{i=1}^n E_i .
\lb{wkkl1}
\end{eqnarray}
where
\begin{eqnarray*}
S_{v,j}^{(n)} &=&\sum_{r=1}^{N_{j,n}}v_{\Psi(r,j)};\\
S_{M,j}^{(n)}&=&\sum_{r=1}^{N_{j,n}}w_{\Psi(r,j)}M_{\Psi(r,j)};\\
v_{i}&=&w_{i}\Big( (1-\frac{m(X_i)}{m_{i-1}(X_{i})}) - E\big[(1-\frac{m(X_{i})}{m_{i-1}(X_{i})}|F_{last,{i}}\big]\Big); \\
M_i&=&E\big[\frac{m(X_i)}{m_{i_{last}}(X_i)} - 1 |F_{last,i}\big];\\
S^{\Delta}_{i}&=&w_{i}E\big[\frac{m(X_i)}{m_{i-1}(X_i)}(\frac{m_{i-1}(X_{i})}{m_{{i}_{last}}(X_{i})} - 1)|F_{last,i})\big],
\end{eqnarray*}
and $E_i$ is as defined following \eqref{KLdiff}.  Next we show convergence of the different parts.
%

\noindent \underline{\it Convergence of $\sum_{j=1}^{L_n} S^{(n)}_{v,j}:$}
We have $E[(S^{(n)}_{v,j})^2]\leq 2\sum w_i^2 E[1+(A_1(X_i))^2]<b'_0$ for some $b'_0>0$. This implies that $S^{(n)}_{v,j}$ is mean zero  a squared integrable martingale with filtration $\sigma(X_1,\cdots,X_{\Psi(N_{j,n},j)})$, and therefore converges almost surely \citep{durrett2019probability}. Thus, outside a set of probability zero $S^{(n)}_{v,j}$'s converges for all $j$, as $j$ varies over a countable set. 

Next, we show that only finitely many martingale sequences in the  $\sum_{j=1}^{L_n} S^{(n)}_{v,j}$ will make  significant contribution with probability one for large $n$. Choose $s>1$ and choose $K$ large enough  such that $ K  > 2s/(2\alpha-1)$. 
From Lemma  \ref{sup_lem}, 
\begin{eqnarray*}
P(\sup_n | S_{v,j}^{(n)}|>\frac{\epsilon}{j^s} \text{ infinitely often})\leq  lim_{n_0 \uparrow\infty}\sum_{j=n_0}^\infty P(\sup_n | S_{v,j}^{(n)}|>\frac{\epsilon}{j^s})\\
\leq \epsilon^{-2}  lim_{n_0 \uparrow\infty}\sum_{j=n_0}^\infty C'_0j^{-K(2\alpha-1)+2s-1}\rightarrow 0
\end{eqnarray*}
 where $C'_0>0$ is a constant.

Hence, outside a null set, say $\Omega_N$ (possibly depending on $\epsilon$),  we have $\sup |S_{v,j}^{(n)}|<\frac{\epsilon}{j^s} $ for all but finitely many $j$'s and $S^{(n)}_{v,j}$'s converge. Fix $\omega\in \Omega\backslash  \Omega_N$,  where $\Omega$ is the underlying product-probability space.   For any $\epsilon_1>0$, we can choose $n_1$ (possibly depending upon $\omega$), such that for $j > n_1$,  $sup | S_{v,j}^{(n)}|<\frac{\epsilon}{j^s}$ and  $\sum_{j=n_1}^\infty  | S_{v,j}^{(n)}|<\epsilon_1$. 
Let  $n_2>\psi(1,n_1)$ large enough such that for  $j\leq n_1$ we have $\sum_{j=1}^{n_1}  | S_{v,j}^{(n_3)} - S_{v,j}^{(n_2)}|<\epsilon_1$ whenever $n_3>n_2$. 
Finally,  \begin{eqnarray*}
|\sum_{j=1}^{L_{n_3}} S^{(n_3)}_{v,j}-\sum_{j=1}^{L_{n_2}}S^{(n_2)}_{v,j}|\leq \sum_{j=1}^{n_1}  | S_{v,j}^{(n_3)}-S_{v,j}^{(n_2)}|+|\sum_{j>n_1} (S^{(n_3)}_{v,j}-S^{(n_2)}_{v,j})|\\
\leq \epsilon_1+\sum_{j>n_1} sup_{n\leq n_2}| S^{(n)}_{v,j}|+\sum_{j>n_1} sup_{n\leq n_3}| S^{(n)}_{v,j}|\leq 3\epsilon_1.
\end{eqnarray*}
As $\epsilon_1>0$ is arbitrary and we can choose $\epsilon_1$ going to zero over a sequence. This implies  $\sum_{j=1}^{L_n} S^{(n)}_{v,j}$ is a  Cauchy sequence with probability one and therefore converges with probability one.

\noindent \underline{\it Convergence of $\sum_{i=1}^n E_i$:}
Using the expression for $A_1(x)$, we have, \[ E[\sum_{i > i_0} |E_i|]\leq\sum_{i > i_0} 4w_i^2E[(1+A_1(x_i))^2]<\infty\] as $w_i(\frac{\pxit}{m_{i-1}(X_i)}-1)>-w_i>-\frac{1}{2}$ for $i > i_0 $  for some $i_0 > 1$, as $w_i\downarrow 0$. Hence, $\sum_{i>i_0}E_i$ converges almost surely from Proposition \ref{pos_sum}. Hence,  $\sum E_i$ converges  almost surely.

\noindent \underline{\it Decomposing ${M}_i$:} 
 We have from  \eqref{wkkl1},
\begin{eqnarray}
M_i&=& \int(\frac{m(x)}{m_{i_{last}}(x)}-1)m(x)\nu(dx)+\int(\frac{m(x)}{m_{i_{last}}(x)}-1)(\frac{p_{i,last}(x)}{m(x)}-1)m(x)\nu(dx) \nonumber \\
&\geq & K^*_{i_{last}}+\int(\frac{m(x)}{m_{i_{last}}(x)}-1)(\frac{p_{i,last}(x)}{m(x)}-1)m(x)\nu(dx).\nonumber\\
\label{wdep_decmp}
\end{eqnarray}
By Cauchy-Schwarz and using [B3] to bound the expression for $A_1(\cdot)$  in \eqref{chaul}, we have 
\begin{eqnarray*}
\delta_{i}=|\int(\frac{m(x)}{m_{i_{last}}(x)}-1)(\frac{p_{i,last}(x)}{m(x)}-1)m(x)\nu(dx) |\leq&\\
\sqrt {\int2(A_1(x)^2+1)m(x)\nu(dx)} &\sqrt{\int (\frac{p_{i,last}(x)}{m(x)}-1)^2m(x)\nu(dx)} .
\end{eqnarray*}
Let $\delta_{i} =$ $ \sqrt{\int (\frac{p_{i,last}(x)}{m(x)}-1)^2m(x)\nu(dx) }$. If $\delta_{i}^{(i)}=E[\delta_{i}]$, then by condition \eqref{strdep} and Jensen's inequality, we have $\delta_i^{(i)} \leq c_0\rho^{(i-i_{last})}$.   By construction of the sequences, a gap $(i - i_{last})$  is equal to  $l$  at most  $(l+1)^K$ times. Also by \eqref{chaul},  $\sqrt {\int2(A_1(x)^2+1)m(x)\nu(dx)}\leq b_1$ for some $b_1>0$.  Thus, $ \sum \delta_{i}^{(i)} \leq c_0b_1\sum_ll^K\rho^{l-1}<\infty$.
Hence, $\sum \delta_{i}$ converges absolutely with probability one and hence, converges with probability one.

\noindent \underline{\it Convergence of $\sum_{i=1}^nS^{\Delta}_i$:} 
By Proposition~\ref{pos_sum},  it is enough to show $\sum E[ |E[w_i(\frac{m(x)}{m_{i-1}(x)}-\frac{m(x)}{m_{i_{last}}(x)})|F_{last,i}]|]\leq \sum E[w_i|\frac{m(x)}{m_{i-1}(x)}-\frac{m(x)}{m_{i_{last}}(x)}|]$ converges. 


Let $n_H$ be the cardinality of $\Theta_H$.
By condition [B3], $E[A_1(X_i)+1]  \leq 1 + n_H c_u B_1 b/a = B_u <\infty$, where $E[\frac{p(X_i|\theta_l)}{p(X_i|\theta_k)}]<B_1$. 
Then, by condition [B2] any $m$-fold product $E[\prod_{i_1\neq\dots\neq i_m} A_1(X_{i_1})] < (B'_u)^m; 0<B'_u<\infty, B'_u=b_0B_u$. 

Expanding as product of ratios of successive marginals, 
\[\frac{m_{i-1}(X_i)}{m_{i_{last}}(X_i)}=\prod_{j=i_{last}+1}^{i-1}\left(1+w_j(\frac{\int p(X_j|\theta)p(X_i|\theta)f_{j-1}(\theta)d\theta}{m_{f_{j-1}}(X_i)m_{f_{j-1}}(X_j)}-1)\right).\]
When $X_j \notin \chi_c,$  $\frac{\int p(X_j|\theta)p(X_i|\theta)f_{j-1}(\theta)d\theta}{m_{f_{j-1}}(X_i)m_{f_{j-1}}(X_j)}\leq \frac{\int p(X_j|\theta_1)p(X_i|\theta)f_{j-1}(\theta)d\theta}{m_{f_{j-1}}(X_i)p(X_j|\theta_2)}\leq A_1(X_j)$, where $p(X_j|\theta)$ is minimized and maximized at $\theta_2$ and $\theta_1$, respectively. When $X_j\in \chi_c$, then  we have $\frac{\int p(X_j|\theta)p(X_i|\theta)f_{j-1}(\theta)d\theta}{m_{f_{j-1}}(X_i)m_{f_{j-1}}(X_j)}\leq \frac{b}{a}\leq A_1(X_j)$. 

Hence,
\[|w_i\frac{m(X_i)}{m_{i-1}(X_i)}(\frac{m_{i-1}(X_i)}{m_{i_{last}}(X_i)}-1)|\leq w_i (\sum_{j_1\in \mathscr{S}_i}w_{j_1}A_1(X_i)(A_1(X_{j_1})+1)+\]\[\sum_{j_1\neq j_2\in \mathscr{S}_i}w_{j_1}w_{j_2}A_1(X_i)(A_1(X_{j_1})+1)(A_1(X_{j_2})+1)+\]\[\dots
+\sum_{j_1\neq j_2\cdots \neq j_{m}\in \mathscr{S}_i}w_{j_1}w_{j_2}\cdots w_{j_m}A_1(X_i)(A_1(X_{j_1})+1)(A_1(X_{j_2})+1)..(A_1(X_{j_m})+1)+..\]
where $\mathscr{S}_i$ be the set $\{(i_{last}+1),..,i-1\}$. 

Coefficients for the $m$-fold products $A_1(X_{j_1})A_1(X_{j_2})\cdots A_1(X_{j_m});$ $j_1\neq j_2\neq\cdots\neq j_m$ are bounded by $w_i\sum_{t=m}^\infty w_{i_{last}}^t < c_1w_{i_{last}}^{(m+1)}$, for some $c_1>0$. The expectations of each of those $m$-fold product  bounded by ${B'_u}^m$. Let  $d_i=i-2-i_{last}$. Then there are ${d_i \choose m}$ many terms consisting of $m$-fold products of $A_i$. 

As $d_i = o({i}^{1/K+\epsilon})$, for any $\epsilon>0$.  Choosing   $N_0$ and  $K$ large enough, we have $w_{i_{last}}< C' i^{-\alpha_1}$, when  $i>N_0,C'>0$  and $.5 + 1/K <\alpha_1'< \alpha_1 <\alpha$, and we have
\[\sum_{i>N_0}E[|S^{\Delta}_i|]\leq C'\sum_i \sum_{m=2}^{d_i} i^{-m(\alpha'_1-1/K)}(B'_u)^m  < \infty.\]
By Proposition~\ref{pos_sum}, we have the result. 

\noindent \underline{\it Combining the parts:}
Having established that $\sum_{j=1}^{L_n} S^{(n)}_{v,j},\sum_{i=1}^n E_i,\sum_{i=1}^nS^{\Delta}_i, \sum \delta_i$ all converge with probability one to finite quantities, we essentially follow the arguments given in \citet{tokdar2009consistency}  . Since $K^*_i>0$, we have $K^*_i$ converging to zero in a sub sequence with probability one, as LHS in \eqref{wkkl1} is finite, because otherwise from the fact $\sum w_i=\infty$, RHS will be $-\infty$. 

Hence,  as $K_n\geq 0$, $\sum w_iK_i^*$ has to converge, as all other sequences converge and therefore $K_i^*$ converges to zero in a subsequence almost surely. Now, using  finiteness of $\Theta$, the proof follows as $K_n$ converges and over that subsequence $f_n$ has to converge to $f$, as otherwise $K_n^*$ cannot converge to zero in that subsequence (\citet{ghosh2006convergence}).

In particular, if  $K^*_{n_k(\omega)}\rightarrow 0$ and if $f_{n_{k_j}}\rightarrow \hat{f}\neq f$ in some sub-subsequence of $n_k$ then corresponding marginal $m_{n_{k_j}}$ converges to $\hat{m}\neq m$ weakly. But, as $K^*_{n_k}\rightarrow 0$, $m_{n_k}$ converges to $m$ in Hellinger distance and also weakly, which is a contradiction. Hence, $f_{n_{k_j}}\rightarrow f$ and therefore $K_{n_{k_j}}\rightarrow 0$. Therefore, $K_n\rightarrow 0$ and  we have $f_n\rightarrow f$ with probability one.
\end{proof}

In general a bigger value of $\rho$ in \eqref{strdep} would indicate stronger dependence among $X_i$ and hence the convergence rate will be expected to be slower for bigger $\rho$. Even though it is hard to write explicitly how  $\rho$ affects the convergence, some insight can be gleaned by studying the different components of the decomposition of the KL divergence and their convergence. 

 Under the setting of Theorem \ref{wdependent}, for convergence of $K_n$, we need for $n>m$  the Cauchy increments, $|K_m-K_n|$ to go to zero. 
From equation \eqref{wkkl1}, the Cauchy difference is essentially made up of a martingale difference sequence, an increment of non-negative  sequence, and other terms that constitute error terms. The main term in $|K_m-K_n|$  that is influenced directly by the dependence parameter  is $\sum_{j=1}^{L_n} S_{M,j}^{(m,n)}$. 
Following calculations of bounds on $M_i$ in \eqref{wdep_decmp}, we have  \[\sum_{j=1}^{L_n} S_{M,j}^{(m,n)} \leq \sum_{i=m}^n w_i\int(\frac{m(x)}{{m}_{{i}_{last}}(x)}-1)m(x)\nu(dx)+\sum_{l=m^{1/K}}^{(n+1)^{1/K}} w_l\tilde{b}_0l^K\rho^{l-1}  \]
for some $\tilde{b}_0>0$. 
Thus, convergence of the conditional mean sequence, and hence the KL sequence is expected to be slower for larger values of $\rho$. This intuition is  reinforced in  Example~1 where   the AR coefficient $r$ plays the role of the dependence parameter in \eqref{strdep}. Numerically, 
we see that   convergence is faster for smaller $r$ and slower  with larger values of $r$. 

As mentioned earlier, consistency of the recursive algorithm can be established in much more generality, even under mild dependence, provided we can assume a slightly stronger condition. The following condition is  stronger than [B2], but maybe more readily verifiable.  
\begin{itemize}
\item[] $\rm{B2}'$. $sup_{\theta_1,\theta_2,\theta_3 \in \hat{\Theta}} E_{\theta_3}[( \frac{p(X_i|\theta_{1})}{p(X_i|\theta_{2})})^j]<b_3^j$, for some $b_3>0$,
\end{itemize}
The condition is sufficient for establishing consistency in the non-finite support with dependent data. However, it need not be necessary. It may not hold in some cases where the support of $X$ is unbounded. For example, the condition does not hold in normal location mixture with dependent data but it does hold when the a truncated normal kernel is used. 

In the proof of Theorem~\ref{wdependent}, one could work with $[\rm{B2}']$ instead of [\rm{B2}]. 
\begin{rmrk}
If condition $[\rm{B2}]$ is replaced with condition $ [\rm{B2}']$ in the statement of Theorem~\ref{wdependent}, the conclusions of Theorem~\ref{wdependent} continue to hold. 
\end{rmrk}
The proof of the corollary is straight-forward and is omitted. The condition $[\rm{B2}']$ is in essence equivalent to [B2] and [B3], when the kernel is bounded and bounded away from zero. 

\begin{rmrk}
If  $inf_\theta p(x|\theta)>0$ and $sup_\theta p(x|\theta)<\infty$ then \mbox{\rm{B2, B3}} and $\mbox{\rm{B2}}'$ are satisfied. 
\label{pos_den}
\end{rmrk}


\begin{thm}
Let $\{X_i\}$ be sequence of random variables satisfying \eqref{strdep} where $X_i$ has a fixed marginal density $m(\cdot)$ given by $m(x) = \int_{\Theta} p(x | \theta) f(\theta) d\theta$ and the support of $f$, $\Theta$, is a compact subset of the corresponding  Euclidean space.
Assume the initial estimate $f_0$ in \eqref{newton} has the same support as $f$.  Let $F_n$ and $F$ denote the cdfs associated with $f_n$ and $f$, respectively. Then under ${\rm{B1},\rm{B2}',\rm{B3,B4}}$, \mbox{\rm{C1--C3}}, the estimate $F_n$ from equation \eqref{newton} converges to $F$ in weak topology with probability one, as $n$ the number of observations goes to infinity. 
\lb{wdependent2}
\end{thm}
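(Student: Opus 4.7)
The plan is to reuse the KL decomposition \eqref{wkkl1} and the subsequence construction $\Psi(r,j)$ from the proof of Theorem~\ref{wdependent}, invoking the stronger moment hypothesis B2$'$ wherever finiteness of $\Theta$ or B2 were previously used. Under B2$'$, every integer moment $E_{\theta_3}[(p(X_i|\theta_1)/p(X_i|\theta_2))^k]$ is bounded uniformly by $b_3^k$ over $\theta_1,\theta_2,\theta_3\in\widehat{\Theta}$; since Proposition~\ref{chaul_bd} writes $A_1$ as a finite sum of such ratios, it follows that $E[A_1(X_i)^k]\leq C^k$ for some constant $C=C(n_H,c_u,a,b,b_3)$. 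This uniform moment control substitutes for the role that finiteness of $\Theta$ played in Theorem~\ref{wdependent}.

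With this in hand, each component of \eqref{wkkl1} is treated exactly as in Theorem~\ref{wdependent}: the martingale $\sum_j S_{v,j}^{(n)}$ converges a.s.\ via Lemma~\ref{sup_lem}, Borel--Cantelli, and the $L^2$ bound $E[(S_{v,j}^{(n)})^2]\lesssim \sum w_i^2 E[A_1(X_i)^2]$; the sum $\sum E_i$ converges absolutely a.s.; and the decomposition $M_i = K^*_{i_{last}}+\delta_i$ combined with $E[\delta_i]\leq c_0 b_1 \rho^{i-i_{last}}$ and the gap-counting estimate $\sum_l l^K \rho^{l-1}<\infty$ yields $\sum \delta_i<\infty$ a.s. For $\sum S_i^{\Delta}$, the expected $m$-fold products $E[A_1(X_{j_1})\cdots A_1(X_{j_m})]$ that were decoupled via B2 in Theorem~\ref{wdependent} are now handled by the generalized H\"older inequality $E[A_1(X_{j_1})\cdots A_1(X_{j_m})]\leq \prod_{k=1}^m E[A_1(X_{j_k})^m]^{1/m}\leq C^m$, which requires no independence. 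Inserting this into the same estimate $\sum_{i>N_0} E[|S_i^\Delta|]\leq C' \sum_i \sum_{m=2}^{d_i} i^{-m(\alpha_1'-1/K)} C^m$ and choosing $K$ large enough gives absolute convergence.

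Combining these pieces exactly as in Theorem~\ref{wdependent} yields $\sum w_i K_i^*<\infty$ a.s., and since $\sum w_i=\infty$ this forces $K^*_{n_k}\to 0$ along some random subsequence $\{n_k\}$ a.s. The new step, relative to the finite-support proof, is to upgrade this to weak convergence of $F_n$. Because $\Theta$ is compact, $\{F_n\}$ is automatically tight, and Helly's selection theorem extracts from any subsequence a further subsequence $F_{n_{k_j}}$ converging weakly to some probability measure $G$ on $\Theta$. Since $\theta\mapsto p(x|\theta)$ is bounded and continuous by B4, dominated convergence gives $m_{n_{k_j}}(x)\to m_G(x):=\int p(x|\theta)\,dG(\theta)$ pointwise for each $x$. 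On the other hand, Pinsker's inequality applied to $K^*_{n_k}\to 0$ forces $m_{n_k}\to m$ in total variation, so $m_G = m$ $\nu$-a.e. Identifiability of the mixture (which holds, e.g., for the location families that C1--C3 were designed to cover) then gives $G=F$. As every subsequence of $\{F_n\}$ contains a further subsequence weakly converging to $F$, the full sequence satisfies $F_n\to F$ weakly a.s.

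The step I expect to be the main obstacle is this last transfer: converting sub-sub-sequential KL convergence of the marginals into weak convergence of the entire sequence of mixing distributions. Compactness of $\Theta$ supplies the tightness needed for Helly, but ruling out a spurious weak limit $G\neq F$ requires identifiability of the mixture map $F\mapsto m_F$, which is not listed as an explicit hypothesis in the theorem and must be read in from the kernel class the conditions C1--C3 implicitly target.
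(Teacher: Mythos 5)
Your handling of the moment bounds is consistent with the paper: under ${\rm B2}'$ the $m$-fold products of $A_1$ are controlled by H\"older's inequality with no decoupling assumption, exactly as the paper does, and the martingale, $\sum E_i$, $\sum\delta_i$ and $\sum S_i^{\Delta}$ pieces of \eqref{wkkl1} go through. The gap is in the final step. From \eqref{wkkl1} you obtain convergence of $K_n$ and $\sum_i w_iK^*_{i_{last}}<\infty$, hence $K^*_{n_k}\to 0$ along \emph{one} random subsequence. Your Helly/Pinsker argument then identifies the weak limit only for sub-subsequences of that particular $\{n_k\}$; the assertion that ``every subsequence of $\{F_n\}$ contains a further subsequence weakly converging to $F$'' is not justified, because for an arbitrary subsequence of the integers you have no control on $K^*$ or on the distance between $m_n$ and $m$. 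In the finite-support case this hole is plugged by full-sequence convergence of $K_n$ together with the fact that convergence of $f_{n_k}$ on a finite set forces $K_{n_k}\to 0$; for compact $\Theta$ that implication fails (weak convergence of $F_{n_k}$ says nothing about $\int f\log(f/f_{n_k})\,d\mu$), so convergence of $K_n$ cannot be transferred and your argument delivers only $F_{n_k}\to F$ along the special subsequence.

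The paper closes exactly this gap by running a second, separate decomposition for the \emph{marginal} KL divergence, equation \eqref{wkkl2}, with its own martingale term $S^{*,(n)}_{v,j}$, drift term $S^{\Delta*}_i$, remainder $E_i^*$ and conditional-mean term $M_i^*$. It shows the first three converge a.s.\ (same subsequencing and ${\rm B2}'$/H\"older machinery), proves that the dominant part of $M_i^*$ is nonnegative via $E_{f_{i_{last}}}\bigl[\bigl(\int p(x|\theta)\frac{m(x)}{m_{i_{last}}(x)}\nu(dx)\bigr)^2\bigr]\geq 1$ with the residual part summable by \eqref{strdep}, and concludes that $\sum w_iM_i^*$, hence $K_n^*$ itself, converges a.s. Full-sequence convergence of $K_n^*$ combined with the subsequential vanishing gives $K_n^*\to 0$ a.s., and only then does the tightness/Helly/Scheff\'e argument yield weak convergence of the entire sequence $F_n$. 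You would need to supply this second decomposition, or some substitute establishing full-sequence convergence of $K_n^*$, for your proof to be complete. Your closing remark about identifiability is fair --- the paper's contradiction ``$\hat{m}\neq m$ since $\hat{F}\neq F$'' does implicitly use injectivity of the map $F\mapsto m_F$ --- but that is an issue shared with the paper's own argument, not one introduced by yours.
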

\begin{proof}
Given in the Appendix.
\end{proof}

\subsection{{ Misspecification of support }}
The predictive recursion algorithm requires specification of the support of the mixing density. The support of $f$ could be misspecified, in which case the solution $f_n$ cannot converge to the true density $f$, however one could still investigate convergence for the sequence $f_n$. Let the support of the initial density in the predictive recursion \eqref{newton} be $\Theta_0$, a compact set  possibly different from $\Theta$, the support of the true mixing density $f$. 
Let $\mathcal{F}_0$ be the class of all densities with the same support as 
$f_0(\theta)$. Specifically, let
\beqn
\mathcal{F}_0 &=& \{{\hat{f}}: {\hat{f}} \mbox{ is a density supported on } \Theta_0\} \nonumber \\
\mathcal{M}_0 &=& \{\hat{m}: \hat{m}(x)=\int_{\Theta_0} p(x|\theta)\hat{f}(\theta)\mu(d\theta); {\hat{f}} \in \mathcal{F}_0\}.
\lb{F0}
\eeqn
Let
\be
\tilde{k}  = \underset{{\hat{m}}\in \mathcal{M}_0}\inf KL(m,\hat{m}).
\lb{ftilde}
\ee
Thus, assuming uniqueness, the  minimizer ${\tilde{m}}\in \mathcal{M}_0$ is the {\it information projection} of $m$ to $\mathcal{M}_0$.  This minimizer is related to the  Gateaux differential   of KL divergence (\citet{patilea2001convex}). For independent observations misspecification of the support has been addressed in the literature; for example see \citet{ghosh2006convergence}, \citet{martin2008stochastic}. Using the decomposition for dependent case in the proof of Theorems~\ref{wdependent2}, \ref{wdependent22}, we extend the result for dependent cases. 
Let ${\tilde{K}}_n^* = K_n^* - \tilde{k}$ where  $K_n^*$  is defined in \eqref{kl2}.
\begin{thm}
Let $\{X_i\}$ be sequence of random variables satisfying \eqref{strdep} and $X_i$ have a fixed marginal density $m(\cdot)$ given by $m(x) = \int_{\Theta} p(x | \theta) f(\theta) d\theta.$ 
Assume  $\Theta$ is a compact and  $f_0$ in \eqref{newton} belongs to $\mathcal{F}_0$ defined in \eqref{F0}.  Assume, there exists a unique $\tilde{f}$ such that $K(m, \tilde{m}) = \tilde{k}$ where $m_{\tilde{f}}=\tilde{m}(x) = \int_{\Theta_0} p(x |\theta)\tilde{f}(\theta)  \mu(d\theta)$.
Assume ${\rm{B1},\rm{B2}',\rm{B3,B4}}$, \mbox{\rm{C1--C3}}. Then,
\[  \lim_{n\to\infty} {\tilde{K}}_n^*  =  0, \mbox{  w.p.1.}\] 
\lb{wdependent22}
\end{thm}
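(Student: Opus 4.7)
I propose to parallel the proof schemes of Theorems~\ref{wdependent} and \ref{wdependent2}, but to work throughout with the shifted quantity
\begin{equation*}
\tilde K_n^* \;=\; K_n^* - \tilde k \;=\; \int m(x) \log \frac{\tilde m(x)}{m_n(x)}\, d\nu(x)
\end{equation*}
in place of $K_n^*$. The key structural observation is that $\tilde m$ is the I-projection of $m$ onto the convex family $\mathcal M_0$, so the Pythagorean inequality for Kullback--Leibler divergence on convex families gives $\tilde K_n^* \geq K(\tilde m, m_n) \geq 0$, i.e., $\tilde K_n^*$ behaves as a nonnegative ``distance'' from the iterate $m_n$ to the projection $\tilde m$. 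Because $\tilde k$ is constant, the one-step increment $\tilde K_n^* - \tilde K_{n-1}^*$ equals $K_n^* - K_{n-1}^*$, so expanding $\log(m_{i-1}/m_i)$ via \eqref{ntnmrg} and $\log(1+x) = x - x^2 R(x)$ produces a decomposition of the same shape as \eqref{wkkl1}, with the marginal analogues $S^{(n)*}_{v,j}$, $S^{(n)*}_{M,j}$, $S^{\Delta *}_i$, $E_i^*$ and with the same sub-indexing $\Psi(r,j)$ and filtration $F_{last,i}$ imported from Theorem~\ref{wdependent}.

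Once the decomposition is written, convergence of the auxiliary pieces is essentially bookkeeping. The martingale-difference sum $\sum_j S^{(n)*}_{v,j}$ converges almost surely by Lemma~\ref{sup_lem} combined with the Borel--Cantelli tail bound used in the proof of Theorem~\ref{wdependent}, provided the exponent $K$ satisfies $K > 2s/(2\alpha-1)$ for some $s>1$; the Taylor remainder $\sum_i E_i^*$ is absolutely summable by $[{\rm B1}]$, $[{\rm B3}]$ and the ratio bound \eqref{chaul}; the bridge correction $\delta_i^*$ that arises from replacing the conditional density $p_{i,last}$ by $m$ inside the conditional integrands is summable because each gap $i-i_{last}=l$ occurs at most $(l+1)^K$ times and $E[\delta_i^*]\leq c_0\rho^{\,i-i_{last}}$ by \eqref{strdep}; and $\sum_i S^{\Delta *}_i$ converges by the multi-fold $A_1$-product bound, which under $[{\rm B2}']$ still gives $E[|S^{\Delta *}_i|] = O(i^{-m(\alpha_1' - 1/K)})$ per fold. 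None of these steps uses correct specification of the support.

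The only genuinely new step is the analogue of the lower bound $M_i \geq K^*_{i_{last}}+\delta$ from \eqref{wdep_decmp}. Because $\tilde f$ is the unique minimizer of $K(m, m_{\hat f})$ over the convex family $\mathcal F_0$ in \eqref{F0}, its first-order (Gateaux) optimality condition yields the variational inequality
\begin{equation*}
\int \frac{p(x|\theta)\, m(x)}{\tilde m(x)}\, d\nu(x) \;\leq\; 1, \qquad \theta \in \Theta_0,
\end{equation*}
with equality $\tilde f$-almost surely. Substituting this in place of the identity $\int p(x|\theta)m(x)/m(x)\,d\nu(x)=1$ used in the correctly-specified case, and then applying $y - 1 \geq \log y$ to $y = \tilde m/m_{i_{last}}$, I would obtain the modified lower bound $-w_i M_i^* \geq w_i \tilde K^*_{i_{last}}$, modulo a further summable correction absorbed into the $\delta_i^*$ budget. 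Plugging this into the recursion, the almost-sure convergence of all other pieces, together with $\tilde K_n^* \geq 0$ and $\sum w_i = \infty$, forces $\tilde K^*_{i_{last}} \to 0$ along a subsequence; uniqueness of $\tilde f$, compactness of $\Theta_0$, $[{\rm B4}]$, and the Hellinger-to-weak-topology argument from the last paragraph of the proof of Theorem~\ref{wdependent} then upgrade this to $\tilde K_n^* \to 0$ almost surely.

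The main obstacle is the third step. The Gateaux inequality naturally has $\tilde m$ in the denominator, whereas the sub-indexed conditioning supplies $m_{i_{last}}$ instead, so one must insert $\tilde m$ as an intermediary and reabsorb the mismatch into either the martingale noise or the dependence correction $\delta_i^*$. Doing this without disturbing the $\sum w_i$-divergence argument that drives $\tilde K^*_{i_{last}}\to 0$ requires careful moment control via $[{\rm B2}']$, $[{\rm C1}]$--$[{\rm C3}]$ and the ratio bound \eqref{chaul}, uniformly in the sub-index $j$.
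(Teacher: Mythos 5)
Your skeleton (sub-indexing, the four-part decomposition, summability of the Taylor, bridge and $\Delta$ terms, and the final ``$\sum w_i=\infty$ forces a null subsequence'' argument) matches the paper, and those auxiliary steps are fine. The gap is in what you yourself flag as the only genuinely new step. You try to extract the lower bound $M_i^*\geq \tilde K^*_{i_{last}}+(\text{summable})$ from the \emph{marginal} decomposition of $K_n^*-\tilde k$, patched with the Gateaux optimality condition $\int p(x|\theta)m(x)/\tilde m(x)\,d\nu(x)\leq 1$. That cannot work as described: in the marginal decomposition \eqref{wkkl2} the conditional-mean term is (up to the dependence correction) $E_{f_{i_{last}}}\big[\big(\int \frac{p(x|\theta)}{m_{i_{last}}(x)}m(x)\,d\nu(x)\big)^2\big]-1$, a variance-type quantity in which $\tilde m$ never appears; Jensen gives only $M_i^*\geq 0$, which yields convergence of $K_n^*$ but says nothing about the limit. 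The quantity $\int(\tilde m/m_{i_{last}}-1)\,m\,d\nu$, to which you want to apply $y-1\geq \log y$, simply is not a term of that decomposition, and the ``mismatch'' between $\tilde m$ and $m_{i_{last}}$ in the denominator is not a summable correction to be reabsorbed --- it is exactly the quantity whose convergence to zero you are trying to prove. (Your displayed inequality $-w_iM_i^*\geq w_i\tilde K^*_{i_{last}}$ also points the wrong way: it would force $M_i^*\leq 0$.)

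The paper's device is different and simpler: run the \emph{mixing-density} decomposition a second time with $\tilde f$ in place of $f$, i.e., write $\log(f_i/f_{i-1})=\log(f_i/\tilde f)-\log(f_{i-1}/\tilde f)$ and track $\tilde K_n=KL(\tilde f,f_n)$ as in \eqref{wkkl11}. The conditional-mean term there is $E[\tilde m(X_i)/m_{i_{last}}(X_i)-1\mid F_{last,i}]$, whose main part is precisely $\int(\tilde m/m_{i_{last}}-1)m\,d\nu\geq \int m\log(\tilde m/m_{i_{last}})\,d\nu=K^*_{i_{last}}-\tilde k=\tilde K^*_{i_{last}}\geq 0$; no variational condition on the I-projection is needed, only that $\tilde k$ is the infimum over $\mathcal{M}_0\ni m_{i_{last}}$. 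Combined with the almost sure convergence of the full sequence $K_n^*$ from the marginal decomposition (the proof of Theorem~\ref{wdependent2}, which is valid whether or not the support is correctly specified), the subsequential convergence $\tilde K^*_{i_{last}}\to 0$ upgrades to $\tilde K_n^*\to 0$. If you replace your third step with this second decomposition, the rest of your outline goes through essentially as written.
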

Note that since $\mathcal{F}_0$ is convex, so is the set of corresponding marginals. Thus, uniqueness of $\tilde{f}$ is guaranteed if the model is identifiable.

The proofs of Theorems~\ref{wdependent22} is given in the Appendix following the proofs of analogous result Theorems~\ref{mdependent22} for $M$-dependent as the proofs involve similar arguments.
\subsection{Convergence rate of the recursive algorithm}
Convergence of PR type algorithms has been explored in recent work such as  \citet{martin2009asymptotic}  where the fitted PR marginal shown to be in $\sim n ^{-1/6}$ radius Hellinger ball around the true marginal almost surely. \citet{martin2012convergence} establishes a better bound for finite mixture under misspecification. PR convergence rates are  similar to nonparametric rate in nature and in current literature does not have the minimax rate (up to  logarithmic factors) such as the rates  shown in \citet{ghosal2001entropies} and subsequent developments. The  convergence rate calculations (\cite{martin2009asymptotic}, \cite{martin2012convergence}) follow from the    super martingale convergence theorem from Robbins-Siegmund (\citet{robbins1971convergence}, \citet{lai2003stochastic}) for independent cases and yield a rate similar to \citet{genovese2000rates}. 

In the presence of dependence, we show the rate calculation assuming a  faster rate of decay for the weights $w_i$. Instead of condition B1 we will assume
\begin{itemize}
\item[$\rm{B1}'$] $w_i \downarrow 0$, and $w_i \sim i^{-\alpha}$,  $\alpha \in (0.75,1]$.
\end{itemize}
We use the subsequence construction technique described in  \ref{martcon} to do the rate calculations. Following  the arguments given in the proofs of  Theorems \ref{wdependent} and \ref{wdependent2}, we establish almost sure concentration in $n^{-(1-\gamma)/2}$ radius Hellinger ball around the true marginal for $\gamma>\alpha$. The rate is slower than the rate for the independent case. Let  $H^2(f,g)=\int (\sqrt f-\sqrt{g})^2 d(\cdot)$ the squared Hellinger distance between densities.

\begin{thm}
Assume the conditions of Theorem \ref{wdependent22} hold with \rm{B1} replaced with $\rm{B1}'$. Then 
\[ n^{-\gamma + 1}\tilde{K}_n^* \rightarrow 0, \mbox{  with probability one} \]
for some $\gamma \in (\alpha,1)$. Moreover, if $\frac{m}{m_{\tilde{f}}}$ is bounded away from zero and infinity, then  $H(m_{\tilde{f}},m_n)=o(n^{-(1-\gamma)/2})$.
\lb{rate_thm2}
\end{thm}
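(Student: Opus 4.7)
The plan is to revisit the decomposition of $\tilde K_n^*$ built in the proofs of Theorems~\ref{wdependent}--\ref{wdependent22}, namely the misspecified version of \eqref{wkkl1}, and to sharpen each of the four components on the multiplicative scale of $n^{1-\gamma}$. The strengthened weight condition $\rm{B1}'$ (namely $\alpha>3/4$) is what buys the extra slack needed to absorb this factor.

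First I would handle the martingale part $\sum_{j=1}^{L_n} S_{v,j}^{(n)}$. Applying Lemma~\ref{sup_lem} (already used in the proof of Theorem~\ref{wdependent}) gives $P(\sup_n |S_{v,j}^{(n)}|>\epsilon j^{-s})\leq C_0' \epsilon^{-2} j^{-K(2\alpha-1)+2s-1}$, and since $\alpha>3/4$ one can choose $s$ and $K$ so that the tail is summable even after multiplying by $n^{1-\gamma}$; Borel--Cantelli then gives $n^{1-\gamma}\sum_j S_{v,j}^{(n)}\to 0$ a.s. For the remainder terms, $E[|E_i|]=O(w_i^2)=O(i^{-2\alpha})$, so the tail sum is $O(n^{1-2\alpha})$, and multiplication by $n^{1-\gamma}$ gives $O(n^{2-2\alpha-\gamma})\to 0$ provided $\gamma>2(1-\alpha)$; since $2(1-\alpha)<\alpha$ when $\alpha>2/3$, any $\gamma\in(\alpha,1)$ works. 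The same logic, using the estimate of $\sum E[|S_i^\Delta|]$ already established in the proof of Theorem~\ref{wdependent}, controls the $S_i^\Delta$ contribution, and the $\sum \delta_i$ bound from condition \eqref{strdep} is much stronger than needed.

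The substantive step is the drift $\sum_{j=1}^{L_n} S_{M,j}^{(n)}$. Following the decomposition \eqref{wdep_decmp} but with $m$ replaced by $m_{\tilde f}$ in the leading convex-divergence term (using the Pythagorean-like inequality $KL(m,m_n)-\tilde k \geq KL(m_{\tilde f},m_n)$ that holds at the information projection $\tilde m$), one obtains $M_i \geq \tilde K^*_{i_{last}} - \delta_i$. Combined with the previous bullets, this yields, almost surely and for every $n$,
\begin{equation*}
\sum_{i=1}^{n} w_i \tilde K^*_{i_{last}}
\;\leq\; \tilde K_0^* - \tilde K_n^* + R_n,
\end{equation*}
where $R_n$ is a sequence that converges a.s.\ and satisfies $n^{1-\gamma}R_n\to 0$ by the bullets above. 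In particular, $\sum_i w_i \tilde K_i^* <\infty$ almost surely.

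The main obstacle, and the step I expect to require the most care, is converting this summability into the pointwise rate $\tilde K_n^* = o(n^{\gamma-1})$. For this I would derive a Robbins--Siegmund type one-step recursion of the form
\begin{equation*}
\tilde K_n^* \;\leq\; (1-w_n)\,\tilde K_{n-1}^* + C w_n^2 + \xi_n,
\end{equation*}
where $\xi_n$ collects the dependence-induced correction and the martingale increment and is summable after the subsequence regrouping. Iterating with $\prod_{k=n_0}^n(1-w_k)\sim \exp(-\sum_{k=n_0}^n w_k)$ and standard Abel summation against $w_n^2\sim n^{-2\alpha}$ with $\alpha>3/4$ gives $\tilde K_n^* = O(w_n) + o(n^{\gamma-1})$ on a full-probability set. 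Since $\alpha>\gamma-1$ a fortiori, this yields $n^{1-\gamma}\tilde K_n^*\to 0$ a.s.

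Finally, under the extra assumption that $m/m_{\tilde f}$ is bounded away from $0$ and $\infty$, a standard inequality (via Pinsker or its local refinement) gives $H^2(m_{\tilde f},m_n)\leq C\, KL(m_{\tilde f},m_n) \leq C\,\tilde K_n^*$, so the Hellinger conclusion $H(m_{\tilde f},m_n)=o(n^{-(1-\gamma)/2})$ follows immediately from the KL rate.
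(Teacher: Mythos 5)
Your skeleton --- first establish $\sum_i w_i\tilde K_i^*<\infty$ from the drift bound $M_i\geq \tilde K^*_{i_{last}}-\delta_i$, then convert that summability into a pointwise rate through a one-step recursion --- has the right shape and is close in spirit to the paper. But two steps as written do not hold up. First, you repeatedly assert that $n^{1-\gamma}$ times a convergent series tends to zero (e.g.\ ``$n^{1-\gamma}\sum_j S^{(n)}_{v,j}\to 0$ a.s.'' and ``$n^{1-\gamma}R_n\to 0$''). A convergent series has a finite but generically nonzero limit, so multiplying by $n^{1-\gamma}\uparrow\infty$ gives $\infty$, not $0$; these claims are false and, fortunately, not what is needed. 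The paper instead multiplies the telescoping identity by $a_i=\sum_{j\le i}w_j$ \emph{before} summing, obtaining \eqref{wkkl3}: $a_n\tilde K_n^*=\sum_i w_i\tilde K^*_{i-1}+\sum_j\hat S^{*,(n)}_{v,j}-\sum_j\hat S^{*,(n)}_{M,j}+\sum_i a_iS^{\Delta*}_i+\sum_i a_iE^*_i$, shows each term on the right converges a.s.\ (reusing the Theorem \ref{wdependent2} machinery with the weights $w_i'=a_iw_i$), concludes that $a_n\tilde K_n^*$ converges to a finite limit, and then uses $a_n\sim n^{1-\alpha}$ to get $n^{1-\gamma}\tilde K_n^*\sim n^{\alpha-\gamma}\,a_n\tilde K_n^*\to 0$ for any $\gamma>\alpha$. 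No individual component needs to be $o(n^{\gamma-1})$.

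Second, the conversion from summability to a pointwise rate is where the real work lies, and your sketch hides it. In the recursion $\tilde K_n^*\le(1-w_n)\tilde K_{n-1}^*+Cw_n^2+\xi_n$, the perturbation $\xi_n$ contains the martingale increment $w_nV_n^*$, which is only conditionally summable; iterating against $\prod_k(1-w_k)$ and invoking ``standard Abel summation'' shows $\sum_k\bigl[\prod_{j>k}(1-w_j)\bigr]\xi_k\to0$ but yields no rate without quantitative control of the tails $\sum_{k>n}\xi_k$. Supplying that control is exactly where $\mathrm{B1}'$ enters: the reweighted martingale has increments $a_iw_iV_i^*$ with $a_iw_i\sim i^{-(2\alpha-1)}$, and square-summability of these weights forces $2\alpha-1>1/2$, i.e.\ $\alpha>3/4$. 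Your proposal never identifies this; you invoke $\alpha>3/4$ only for the error terms, where $\alpha>2/3$ would already suffice. A smaller mismatch: in the dependent decomposition the drift is $-w_i\tilde K^*_{i_{last}}$, not $-w_i\tilde K^*_{i-1}$, so writing the factor $(1-w_n)$ requires absorbing $\tilde K^*_{i-1}-\tilde K^*_{i_{last}}$ into the remainder via the $S^{\Delta}$-type estimates. The final Hellinger step is fine.
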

\begin{proof}

From the proof of Theorem \ref{wdependent2} and \ref{wdependent22} (in Appendix), $\sum w_i\tilde{K}^*_{i_{last}}$ converges where $i_{last}=\psi(l-1,j)$ if $i=\psi(l,j)$. By construction $i-i_{last}\preceq i^{1/K}$ and $\frac{w_{i_{last}}}{w_i}=O(1)$.  Hence, $\sum w_i\tilde{K}^*_{i-1}$ converges almost surely. Let, $a_i=\sum_{j=1}^iw_j, a_0=0$. Then,  from \ref{wkkl2}, 
\begin{eqnarray}
a_n\tilde{K}_n^*-a_0\tilde{K}_0^*=\sum_{i=1}^nw_i\tilde{K}^*_{i-1}+\sum_{j=1}^{{L_n}} \hat{S}_{v,j}^{*,(n)}-\sum_{j=1}^{L_n} \hat{S}_{M,j}^{*,(n)}+\sum_{i=1}^na_iS^{\Delta*}_i+\sum_{i=1}^na_i E^*_i . \nonumber\\
\lb{wkkl3}
\end{eqnarray}
Here
\begin{eqnarray*}
 \hat{S}_{v,j}^{*,(n)}&=&\sum_{i'=1}^{N_{j,n}}\hat{v}^*_{\Psi(i',j)},\\
\hat{S}_{M,j}^{*,(n)}&=&\sum_{i'=1}^{N_{j,n}}a_{\Psi(i',j)}w_{\Psi(i',j)}M^*_{\Psi(i',j)},\\
\hat{v}_{i}^*&=&a_iw_i(1-\int\frac {h_{i,X_i}(x)}{m_{i-1}(x)}m(x)\nu(dx))-a_iw_iE [1-\int_{\chi}  \frac{h_{i,X_i}(x)}{m_{i-1}(x)}m(x)\nu(dx) |\mathcal{F}_{last,i}].
\end{eqnarray*}

Writing $w_i'=a_iw_i\sim i^{-\alpha_2'}$, $\alpha_2'=2\alpha-1>0.5$, from the proof of Theorem \ref{wdependent2}, it can be shown that $\sum_{i=1}^na_iS^{\Delta*}_i$ , $\sum_{i=1}^na_i E^*_i $, $\sum_{j=1}^{{M_n}} S_{v,j}^{*,(n)}$ converge almost surely. We already established that $\sum w_i\tilde{K}^*_{i-1}$ converges almost surely. Hence,  $\sum_{j=1}^{M_n} \hat{S}_{M,j}^{*,(n)}$ has to converge almost surely and would imply that $a_{\Psi(i',j)}w_{\Psi(i',j)}M^*_{\Psi(i',j)}$ goes to zero over a subsequence.

As,  $a_n\tilde{K}^*_n$ converges to a finite number  for each $\omega$ outside a set of probability zero, and $a_n \sim n^{1-\alpha}$, then $n^{1-\gamma}\tilde{K}_n^*$ goes to zero with probability one for $\gamma>\alpha$. As $\frac{m_{\tilde{f}}}{m}$ is bounded away from zero and infinity, $KL(m_{\tilde{f}},m_i) \sim \int_{\chi}\log\frac{m_{\tilde{f}}(x)}{m_i(x)}m(x)\nu(dx)$, the conclusion about the Hellinger distance follows as  Kullback Leibler divergence is  greater than squared Hellinger distance.

\end{proof}

\subsection{Finite Mixture of Markov processes and other examples}
We consider a few examples of dependent processes, e.g. mixtures of Markov processes,  where the marginal density is a mixture. Uniqueness of such representation has been studied extensively in the literature. Some  of the earliest work can be found in \citet{freedman1962mixtures}, \citet{freedman1962invariants}.  Also see related work in \citet{diaconis1980finetti}.  We here look at variants of stationary autoregressive  processes of order one, AR(1), plus noise and Gaussian processes with constant drift. While simulation is done for  different sample sizes, our main objective is  to study the effect of dependence on convergence of the PR algorithm and not the rate of convergence. In fact, we use $w_i=1/(i+1)$ throughout for which $\alpha = 1$. Thus, condition $\rm{B1}'$ and hence Theorem  ~\ref{rate_thm2} are not applicable for these examples.\\ 

The condition given in \eqref{strdep} is a sufficient condition, and if the  marginal, $m(X_i)$ is a finite mixture of marginals of latent Markov processes which individually satisfy \ref{strdep},  and B2 then   a similar but slightly weaker condition will hold for $m(\cdot)$, and Theorem \ref{wdependent} will hold for the mixture. In particular, if we have  $X_i=I_{i,1}Z_{i,1}+\dots+I_{i,k}Z_{i,k}$ where  $Z_{i,j}$'s are independent Markov processes with stationary marginal densities $m^{(1)}(\cdot),\dots,m^{(k)}(\cdot)$ and $I_{i,1},\dots,I_{i,k}\sim Multinomial(1,p_1,\dots,p_k)$ and the marginal distribution $m^{(l)}(x)$'s are  parametrized by $\theta_l$'s( i.e. of the form $p(x|\theta_l)$), we can state the following result. 

\begin{proposition}
Let 
$X_i=I_{i,1}Z_{i,1}+\dots+I_{i,k}Z_{i,k}$ where  $Z_{i,j}$'s are independent Markov processes with stationary marginal densities $m^{(1)}(\cdot),\dots,m^{(k)}(\cdot)$ and $I_{i,1},\dots,I_{i,k}\sim Multinomial(1,p_1,\dots,p_k)$. Suppose, for each $l=1,\ldots,k,$ there exists  $c_0>0$ and $0 < \rho <1$ such that
\begin{eqnarray}
sup_i\int(\frac{m^{(l)}(X_{i+n}|X_{1:i})}{m^{(l)}(X_{i+n})}-1)^2 m(X_{i+n})m(X_{1:i})\nonumber \\
=sup_i\int(\frac{m^{(l)}(X_{i+n}|X_i)}{m^{(l)}(X_{i+n})}-1)^2 m(X_{i+n})m(X_i)\leq& c_0\rho^{2n}.
\label{str_dep_mrkov}
\end{eqnarray}  
Then Theorem \ref{wdependent} holds for the mixture process.
\label{markov_mix}
\end{proposition}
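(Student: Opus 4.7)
The plan is to verify that the mixture process satisfies the hypotheses of Theorem~\ref{wdependent}, so that its conclusion yields consistency for the mixture. The marginal density of $X_i$ is $m(x)=\sum_{l=1}^k p_l\,p(x\mid\theta_l)$, which is a mixture with finite support $\Theta=\{\theta_1,\ldots,\theta_k\}$, placing us in the setting of Theorem~\ref{wdependent}. Conditions $[B1]$, $[B3]$, $[B4]$ and $[C1]$--$[C3]$ concern only the weights and the kernel $p(x\mid\theta)$ and are inherited directly, while $[B2]$ is part of the hypothesis. The substantive task is therefore to deduce a version of the exponential-decay condition \eqref{strdep} for the mixture process from the per-component assumption \eqref{str_dep_mrkov}.

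I would first exploit the fact that the chains $Z_{\cdot,l}$ are mutually independent and that the multinomial label sequence $I_{\cdot}$ is i.i.d.\ and independent of the $Z$'s. Conditioning on the future label $I_{i+n}=l$ and integrating out the latent labels of the past yields
\begin{eqnarray*}
m(X_{i+n}\mid X_{1:i})\;=\;\sum_{l=1}^k p_l\,\tilde m^{(l)}(X_{i+n}\mid X_{1:i}),\qquad m(X_{i+n})\;=\;\sum_{l=1}^k p_l\,m^{(l)}(X_{i+n}),
\end{eqnarray*}
where $\tilde m^{(l)}(\cdot\mid X_{1:i})$ is a posterior-averaged $l$-th transition density: a convex combination, weighted by the posterior over $I_{1:i}$ given $X_{1:i}$, of transition densities $m^{(l)}(\cdot\mid X_{j^*})$ indexed by $j^*\le i$, the most recent time the $l$-th chain was selected (and of the unconditional $m^{(l)}$ if no such index exists).

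The key step is a two-level application of the joint convexity of the $\chi^2$-divergence -- once to strip off the outer mixture over components, once to strip off the inner posterior average -- yielding
\begin{eqnarray*}
\int\!\Bigl(\frac{m(x\mid X_{1:i})}{m(x)}-1\Bigr)^2\!m(x)\,d\nu(x)
\;\le\;\sum_l p_l\sum_{j^*}\pi_l(j^*\mid X_{1:i})\int\!\Bigl(\frac{m^{(l)}(x\mid X_{j^*})}{m^{(l)}(x)}-1\Bigr)^2\!m^{(l)}(x)\,d\nu(x).
\end{eqnarray*}
Taking expectation over $X_{1:i}$, switching the reference measure from $m^{(l)}$ to $m$ via $m^{(l)}\le p_l^{-1}m$, and invoking \eqref{str_dep_mrkov} for each component using the gap bound $i+n-j^*\ge n$ would give
\begin{eqnarray*}
\sup_i E\Bigl[\int\!\Bigl(\frac{m(X_{i+n}\mid X_{1:i})}{m(X_{i+n})}-1\Bigr)^2 m(X_{i+n})\,d\nu\Bigr]\;\le\;c_0'\rho^{2n},
\end{eqnarray*}
for a constant $c_0'$ depending on $k$ and $\min_l p_l$. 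This is the slightly weakened form of \eqref{strdep} needed to run the proof of Theorem~\ref{wdependent}.

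The main obstacle throughout is the bookkeeping for the unobserved labels: $m(X_{i+n}\mid X_{1:i})$ is an average over label histories, so the Markov property of a single chain is not directly available. The two-level convexity argument above is what isolates the exponential decay of each individual chain and transfers it, up to constants, to the mixture process. Once that decay condition is in place, the conclusion of Theorem~\ref{wdependent} applies verbatim to the mixture, completing the proof.
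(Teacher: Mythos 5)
Your proposal is correct and follows essentially the same route as the paper: both arguments condition on the latent label history, reduce the conditional density of the mixture to an average of per-component transition densities $m^{(l)}(\cdot\mid X_{j^*})$ indexed by the last time $j^*$ each component was selected, transfer the $\chi^2$-decay of the individual chains to the mixture, and sum the resulting geometric series over $j^*$. Two small differences are worth noting. First, your two-level convexity bound is cleaner than the paper's, which squares a triangle-inequality bound and picks up $\pi_l^{-2}$ factors before reducing to $\max_l \sum_{i'}(\,\cdot\,)^2$; as a result you recover the full uniform condition \eqref{strdep} up to a constant, whereas the paper settles for the weaker conclusion $E[\delta_i^{(i)}]\preceq i_{last}\,\rho^{\,i-i_{last}}$ and verifies directly that $\sum_i E[\delta_i^{(i)}]<\infty$, which is all that the proof of Theorem \ref{wdependent} actually consumes. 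Second, you declare condition [B2] to be ``part of the hypothesis,'' but the paper treats it as something to be \emph{verified} for the mixture process (by conditioning on the indicators and computing the moments of the products recursively, as in Example~1); if you intend to invoke Theorem \ref{wdependent} as a black box you should either add that verification or state explicitly that [B2] for the mixture is being assumed.
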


\begin{proof}
By assumption, the marginal of $X_i$ is  $m(X_i)=\pi_1m^{(1)}(X_i)+\cdots+\pi_km^{(k)}(X_i)$. Let $\mathscr{E}^{i_1}_{i_2,i',j}$ be the event, $I_{i_1,l}=1$, for $l=j$ and the last time $I_{i,j}=1$, for $i\leq i'$ for $i=i_2$. Then
\[(\frac{m(X_{i+n}|X_{1:i})}{m(X_{i+n})}-1)^2 = 
(\frac{\sum_l\pi_lm^{(l)}(X_{i+n})+\sum_{i'=1}^i \sum_l\Delta^{(l)}_{i+n | i:i'} p(\mathscr{E}^{i+n}_{i',i,l}|X_{1:i})}{\sum_l\pi_lm^{(l)}(X_{i+n})}-1)^2.\]
where $\Delta^{(l)}_{i+n | i:i'} = m^{(l)}(X_{i+n}|X_{1:i'})-m^{(l)}(X_{i+n}).$
Hence, 
\begin{eqnarray}
(\frac{m(X_{i+n}|X_{1:i})}{m(X_{i+n})}-1)^2 &\leq& [\sum_l\pi_l^{-2}\sum_{i'=1}^i |\frac{m^{(l)}(X_{i+n}|X_{1:i'})-m^{(l)}(X_{i+n})}{m^{(l)}(X_{i+n})}|p(\mathscr{E}^{i+n}_{i',i,l}|X_{1:i})]^2\nn\\
&\preceq& max_l \sum_{i'=1}^i (\frac{m^{(l)}(X_{i+n}|X_{1:i'})}{m^{(l)}(X_{i+n})}-1)^2.
\label{str_dep2}
\end{eqnarray}
From  the calculation following equation \eqref{wdep_decmp}, it follows that $E[\delta_i^{(i)}]\preceq i_{last} \rho^{i-i_{last}}$ with $\rho$  given in equation \eqref{str_dep_mrkov} and  $i_{last}$ is as defined in the proof of Theorem \ref{wdependent}, where $\preceq$ denotes less than equal to up to a constant multiple. For  $i$ between $l^K$ and $(l+1)^K$, $i-i_{last}=l$. Hence, we have
$\sum_i E[\delta_i^{(i)}]\preceq l^{2K}\rho^{l-1}<\infty$. 

Similarly, B2 can be verified by conditioning on the indicators and rest of the argument from Theorem \ref{wdependent} follows.
\end{proof}

\noindent{\it Example 1. AR(1).} As an example of mixture of independent Markov processes, we consider a mixture where one component is a Gaussian white noise and other component is a stationary AR(1) process. Let $X_{1,i} \sim AR(1)$  with marginal $N(0,1)$.  and let $X_{2,i} \stackrel{iid}{\sim} N(2.5,1)$,  independent of $X_{1,i}$.  Let $X_i=I_iX_{1,i}+(1-I_i)X_{2,i}$, where $I_i$s are independent Bernoulli(0.3). We consider different values for the AR(1) parameter, $r$,   and investigate the effect on the convergence. To have the stationary variance of the AR(1) process to be one we choose the innovation variance to be equal to $\sqrt{1 -r^2}$. For the AR(1) process, the dependence parameter in condition \eqref{strdep}, $\rho,$ is a monotone function of $r$. Specifically, we use $r \in \{0.3, 0.7, 0.99, 0.999\}$.  A typical example is given in Figure \ref{AR1}, where we see the higher the value of $r$ the slower the convergence. While for moderate values of $r$, the effect is negligible, for strong dependence,  the effect on convergence is very pronounced. In this example for $f_0$ starting mixing probability $p=0.7$ has been used where the true mixing probability is 0.3.
\begin{figure}
\centering
\includegraphics[width=2in,height=1.45in]{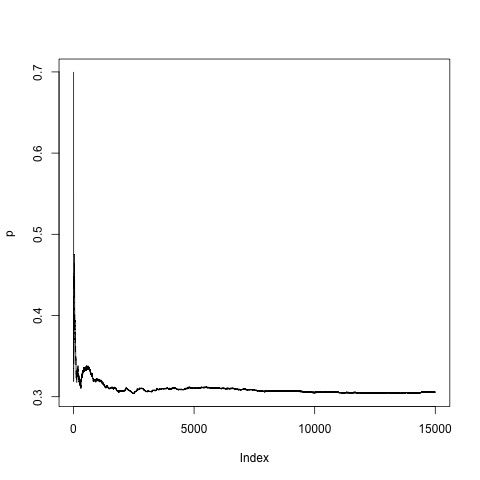}
\includegraphics[width=2in,height=1.45in]{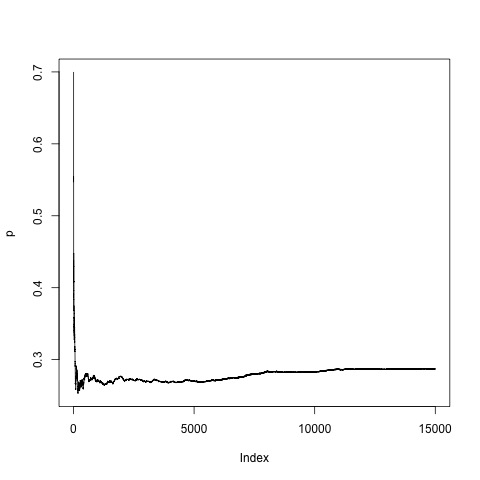}\\
\includegraphics[width=2in,height=1.45in]{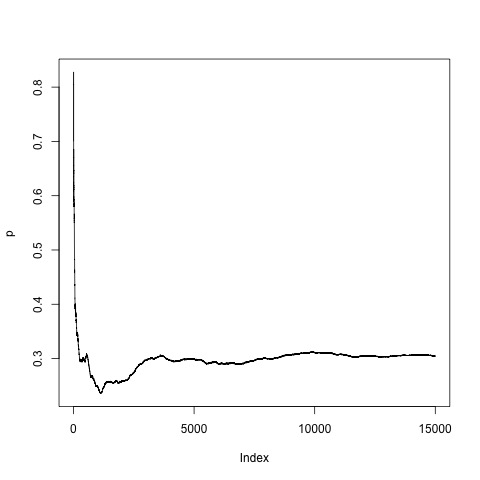}
\includegraphics[width=2in,height=1.45in]{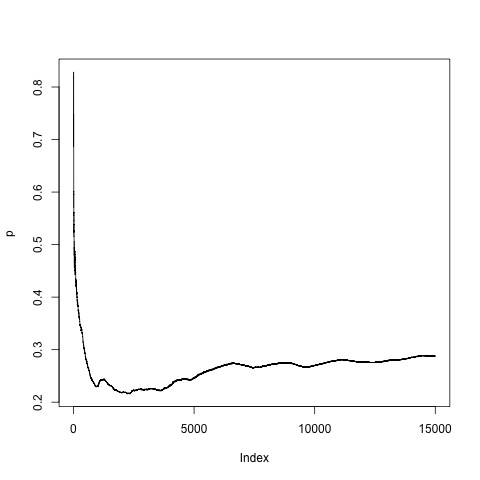}\\
\caption{ The AR 1 example for the general dependent case. Top panel shows $r=.3$ and $r
=.7$ in left  and right panel, respectively. Bottom panel shows $r=.99$ and $r=.999$ in left  and right panel, respectively. True value of $p$ is .3 . The  heavily dependent cases $r=.99$ and $r=.999$ show slower convergence. }
\lb{AR1}
\end{figure}

%

Here it is easy to see that the individual components satisfy \eqref{strdep}.  Thus, the PR algorithm based on $\{X_i\}$ will consistent for the marginal $m(X_i)$ provided $\{X_i\}$ satisfy the  moment conditions assumed in Theorem~\ref{wdependent}. This part follows from conditioning on the indicators and then calculating $E[(\prod_{l=1}^{n_1}\frac{p(X_{j_l}|\theta_{l_1})}{p(X_{j_l}|\theta_{l_2})})^2]$ given the indicators by recursively conditioning on the observed $X_i$'s from the earlier indices. In particular, it will be sufficient  to show for the case when $n_1=n$, $j_i=i$, and the values of $I_i=1$ for all $i$ that is $X_1,\dots,X_n$ comes from the latent $AR(1)$ process.
 Note that $(\frac{p(X_{j_l}|\theta_{l_1})}{p(X_{j_l}|\theta_{l_2})})^2=c'_{\theta_{l_1},\theta_{l_2}}e^{c_{\theta_{l_1},\theta_{l_2}}X_i}$ where $|c_{\theta_{l_1},\theta_{l_2}}|,|c'_{\theta_{l_1},\theta_{l_2}}|$ are uniformly bounded (say by $\tilde{c}>0$). 
 For convenience we write $c_{\theta_{l_1},\theta_{l_2}}=c_{l,1,2}, c'_{\theta_{l_1},\theta_{l_2}}=c'_{l,1,2}$. 
 
  Let $j_l=n, \theta_{l_1}=\theta_{n_1}, \theta_{l_2}=\theta_{n_2}$. Let $\mathscr{E}_{1,\dots,n}$ denote the event that $I_{i}=1, i=1,\dots,n $.  Hence,
  \[E[ [\frac{p(X_{n}|\theta_{n_1})}{p(X_{n}|\theta_{n_2})}]^2|X_1,\dots,X_{n-1}, \mathscr{E}_{1,\dots,n}]=e^{r c_{n,1,2}X_{n-1}+\frac{c^2_{n,1,2}(1-r^2)}{2}}c'_{n,1,2}.\]
  Similarly, 
  \begin{eqnarray*}
E[ [\frac{p(X_{n}|\theta_{n_1})}{p(X_{n}|\theta_{n_2})}\frac{p(X_{n-1}|\theta_{{(n-1)}_1})}{p(X_{n-1}|\theta_{{(n-1)}_2})}]^2|X_1,\dots,X_{n-2},\mathscr{E}_{1,\dots,n}]\\ \leq\tilde{c}^2e^{(c_{n-1,1,2}+rc_{n,1,2})X_{n-2}+\frac{(r c_{n,1,2}+c_{n-1,1,2})^2(1-r^2)+c^2_{n,1,2}(1-r^2)}{2}}.
 \end{eqnarray*}
Applying the bounds recursively we have the result.

\noindent{\it Example 2. A continuous mixture of AR(1).} Next we consider a continuous mixture in the mean with error term following an AR(1) dependence. Let  $X_i=\theta_i+Z_i$ where $Z_i$ follows an AR(1) model with  parameter  $r = 0.7$. Let $\theta_i \stackrel{iid}{\sim} TN(0,1,-3,3)$, a standard normal distribution truncated to [-3,3].  Using a uniform[-3,3] as  $f_0$ in \eqref{newton}, the fitted values are given in the left box in Figure \ref{armix0}. The   true mixing density is given in solid blue. Solid  and dashed black show the fit using $n=1000, 2000$, respectively.  

The following argument verifies the conditions for consistency for the PR algorithm  for a mean mixture of  AR(1) process, $X_i=\theta_i + Z_i$. Here  $Z_i$ follows $AR(1)$ and $\theta_i$  are i.i.d with density $f(\theta)$,  where $f(\theta)$ is bounded and  bounded away from zero and has compact support.
We write,
\[\frac{m(X_{i+n}|X_{1:i})}{m(X_{i+n})}=\frac{\int p_Z(X_{i+n}-\theta|\theta_i=\theta',X_{1:i})f(\theta)f_{i|X_1\dots,X_i}(\theta')d\theta d\theta'}{\int p_Z(X_{i+n}-\theta)f(\theta)d\theta}.\]
Here, $p_Z(X_{i+n}-\theta|\theta_i=\theta',X_{1:i})=p_Z(X_{i+n}-\theta|X_i-\theta')$, and $f_{i|X_1\dots,X_i}(\cdot)$ is the conditional density of $\theta_i$ given $X_1,\dots,X_i$.  Hence,
\begin{eqnarray*}
|\frac{m(X_{i+n}|X_{1:i})}{m(X_{i+n})}-1|&=&|\frac{\int \{p_Z(X_{i+n}-\theta|X_i-\theta')-p_z(X_{i+n}-\theta)\}f(\theta)f_{i|X_1\dots,X_i}(\theta')}{\int p_Z(X_{i+n}-\theta)f(\theta)}|\\
&\leq& F_1(X_{i+n}){\int |\frac{p_Z(X_{i+n}-\theta|X_i-\theta')}{p_z(X_{i+n}-\theta)}-1|f(\theta) f_{i|X_1\dots,X_i}(\theta')},
 \end{eqnarray*}
 where $F_1(X_{i_n})\preceq e^{\tilde{c} X_{i+n}}+ e^{-\tilde{d} X_{i+n}}, \tilde{c},\tilde{d}>0$, using the fact that the support of $f(\cdot)$ is bounded and $f(\cdot)$ is bounded away from zero and infinity.  Hence, all the moments of $F_1(X_{i+n})$ are finite  (in particular,   $E[(F_1(X_{i+n}))^{a_0'}]<\infty$ for any $a_0'>0$). The result  follows by noticing that $ \int p_Z(X_{i+n}-\theta)f(\theta)>e^{-\frac{X^2_{i+n}}{2}}e^{-a_0|X_{i+n}|}b_0$; where $ a_0,b_0>0$ are constants. It should be noted that  $X_{i+n}=Z_{i+n}+\theta_{i+n}$, where $\theta_{i+n}\sim f(\theta)$ and bounded. 
Using a similar argument, writing the joint density of $X_1,\dots,X_n$ in convolution form, $f_{i|X_1\dots,X_i}(\theta')\leq F_2(X_i,X_{i-1})$ where all the moments of $F_2(\cdot,\cdot)$ are finite. 

Now, $\frac{p_Z(X_{i+n}-\theta|X_i-\theta')}{p_z(X_{i+n}-\theta)}=c_n''e^{c_n'X_{i+n}^2+d_n'X_{i+n}X_i+e_n'}$, where $c_n',d_n',e_n'=O(r^n); c_n''=1+O(r^n)$. Hence, using Cauchy-Schwartz inequality we have,  
\[\int|\frac{m(X_{i+n}|X_{1:i})}{m(X_{i+n})}-1|^2m(X_{i+n})m(X_{1:i})\preceq r^n.\]
Hence the dependence condition is satisfied. 
Next we verify condition $[B2]$ for the mixture process.
For $i_1<\cdots<i_{n'}$, such that $\{i_1,\cdots,i_{n'}\}\in\{1,\cdots,n\}$, and  for $X_i=Z_i+\theta_i$, where $\theta_i\in [a,b]$, let $\theta_{i_l};l=1,\cdots,n'$, and  $\theta_{j_l};l=1,\cdots,n'$ such that $\theta_{i,l},\theta_{j,l}\in\hat\Theta\subset [a,b]$.  Choosing, $b$ large enough, and $a$ small enough,
\begin{eqnarray*}
\prod_{l=1}^{n'} \frac{p(X_{i_l}|\theta_{i_l})}{p(X_{i_l}|\theta_{j_l})} \leq e^{n'[(b-a)^2+b^2]}\prod_{l=1}^{n'} e^{-Z_{i_l}(\theta_{j_l}-\theta_{i_l})}
\end{eqnarray*}
where $Z_i$ follows $AR(1)$ with parameter $r$. Then by computing expectations recursively and 
noting that $Z_{i_{n'}}|Z_{i_1},\cdots,Z_{i_{n'-1}}\sim N(r^{i_l-i_{l-1}}Z_{i_{l-1}},1-r^{2(i_l-i_{l-1})})$, the moment condition is established using arguments similar to those in Example~1.

\noindent{\it Example 3. An irregular mixture with AR(1) error.} Consider the last example but for a  mixing distribution that has more structure. Specifically, let  $\theta_i\stackrel{iid}{\sim} 0.5\delta_{\{0\}} + 0.5 TN(4,1,-8,8)$. Here $\Theta=[-8,8]$. Using  a continuous uniform $f_0$ on [-8,8] the fitted densities are  given in the middle box of Figure \ref{armix0} for sample sizes  $n=500, 1000$, respectively. The algorithm converges to a mixture structure for $f$ with  probability around zero in the interval (-.5,.5) approximately equal to  0.45. The true mixing distribution is given by the solid line and the fit are given by the dotted line.  While the estimated density is showing bimodality,  a much larger sample size was also used to see explicitly the effect of large $n$. The right box in Figure \ref{armix0} shows the estimated mixture density for $n = 5000$. The estimate is markedly better around the continuous mode and the other mode is more concentrated around zero indicating recovery of the discrete part. How the convergence is markedly slower than that in Example~2. Verification for conditions of  consistency follows from the previous example. 
\begin{figure}
\centering
\includegraphics[width=1.6in,height=1.5in]{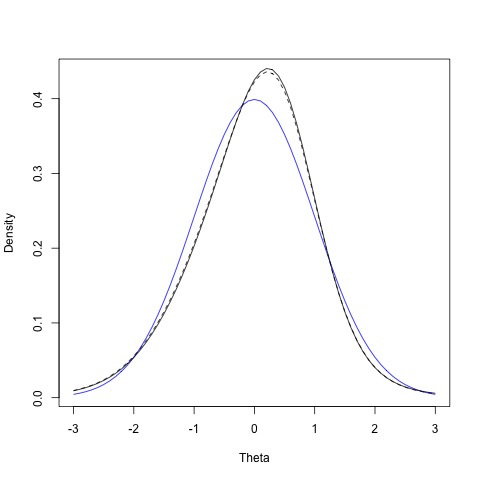}
\includegraphics[width=1.6in,height=1.5in]{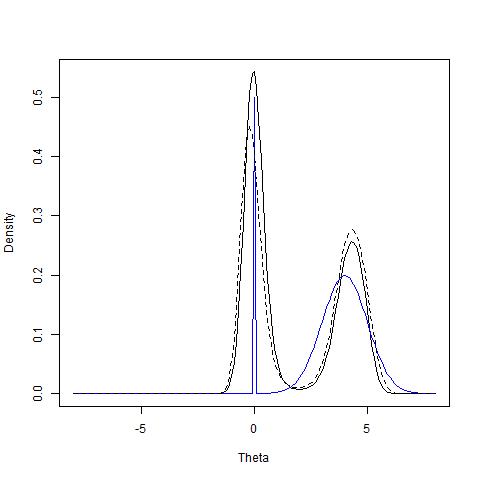}
\includegraphics[width=1.6in,height=1.5in]{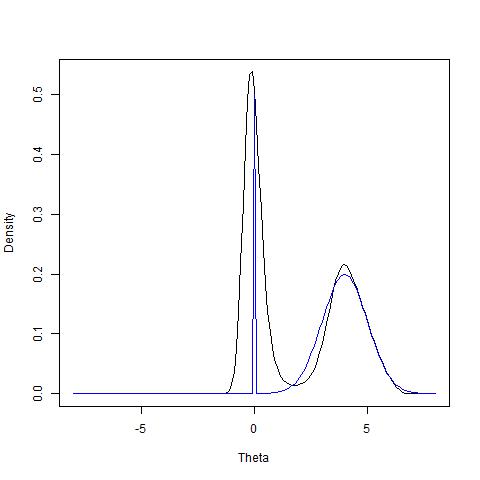}
\caption{ Examples with continuous  mixture of AR(1). Left panel corresponds to Example~2 with $\theta_i \sim TN(0,1,-3,3)$; middle and right panel are density estimates for Example~3.For Example~2,  $n=1000,2000$ (dashed and solid, respectively) .
For Example~3: $n=500,1000$ (dashed and solid respectively, in the middle box) and $n=5000$ in the right panel. True mixture is given by blue and the fitted ones are given by black.  }
\lb{armix0}
\end{figure}

\noindent{\it Example 4. Gaussian process paths.}

We consider two sub-examples with observations from a  latent Gaussian process with known covariance kernel. In first case, the observations are shifted by a fixed mean with some unknown probability. In the second case,   we consider a unknown constant drift along with the mean function. We observe from the marginal distribution.

\noindent {\it a) Zero drift.} Consider  a continuous time process observed at times $t_i, i=1,\ldots,n$,  where the observed process is the sum of  two independent latent processes.  Suppose, $X(t) \sim \mathscr{GP}(-1,\mathscr{K})$ with covariance kernel $\mathscr{K}(t_1,t_2)=.1e^{-(t_1-t_2)^2/10}$ and the observed process $Y(t_i) = Z_i a_1(t_i) + X(t_i)$ where  $a_1(t)=3$ and $Z_i \stackrel{iid}{\sim} Bernoulli(0.3)$ independent of $X(t)$. 
Using known support i.e support at $-1$ and $2$ for the mean we can plot the predictive recursion update for the probability at -1  in Figure \ref{drift0} left panel (true value is 0.7). Suppose, we start with a continuous support on $[-3,3]$ and uniform $f_0$ then the predictive recursion solution for $n=1000$ is given in right panel.\\
\begin{figure}
\centering
\includegraphics[width=2in,height=1.6in]{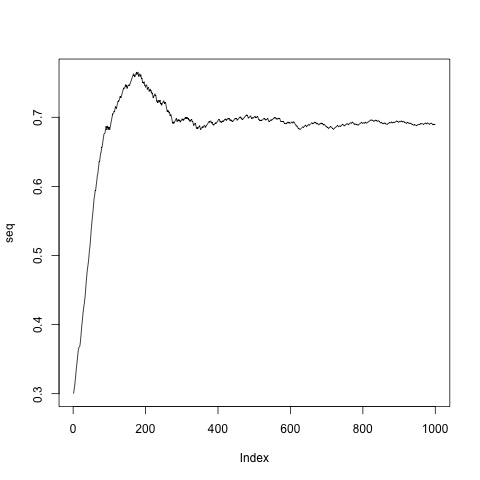}
\includegraphics[width=2in,height=1.6in]{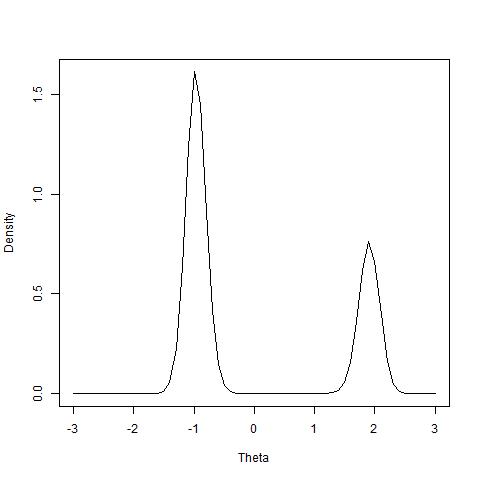}
\caption{Example 4 in Weak-dependent case. Left  panel shows the convergence of the mixing density under true discrete support where index in the X axis is the number of observations used. Right panel shows the fitted mixing density if $f_0$ is $Unif[-3,3]$ which gives mass around the true discrete support.}
\lb{drift0}
\end{figure}

\noindent {\it b) Constant drift.} A similar setting as in part (a) is considered with  $Y(t_i)=Z_ia_1(t_i)+X(t_i)$, $X(t) \sim \mathscr{GP}(0,\mathscr{K})$ and  $\mathscr{K}(t_1,t_2)=.1e^{-(t_1-t_2)^2/10}$,  where there is a drift given by $a_1(t)=\alpha+\beta t/100$ and the observed points are at $t_i$'s, $i=1,\cdots,n$, and $\alpha=5,\beta=2$. Figure \ref{drift02} top panel shows the fitted marginal and joint mixing densities for the slope and intercept parameters $\alpha$ and $\beta$  by the SA algorithm,  where initial density is uniform on the rectangle  $[-6,6]\times[-6,6]$. We have concentration around $0$ and $5$ for the intercept parameter, and, $0$ and $2$ for the slope parameter.  

 Figure  \ref{drift02} bottom panel shows the marginal fit when the starting density is uniform on  $[-3,3]\times[-6,6]$ and  the support does not contain the true intercept  parameter. In that case the mixing density for intercept concentrates around $0$ and $3$, corresponding to nearest KL distance point.
\begin{figure}
\vspace{-0.3in}
\centering
\includegraphics[width=1.3in,height=2in]{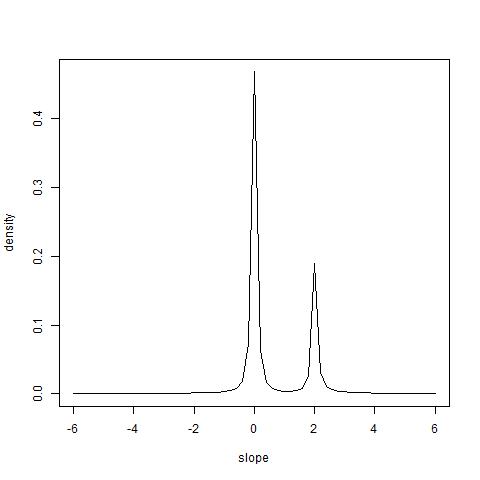}
\includegraphics[width=1.3in,height=2in]{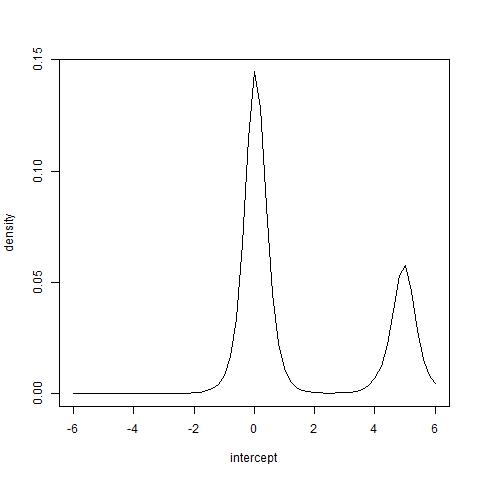}
\includegraphics[width=2.1in,height=2in]{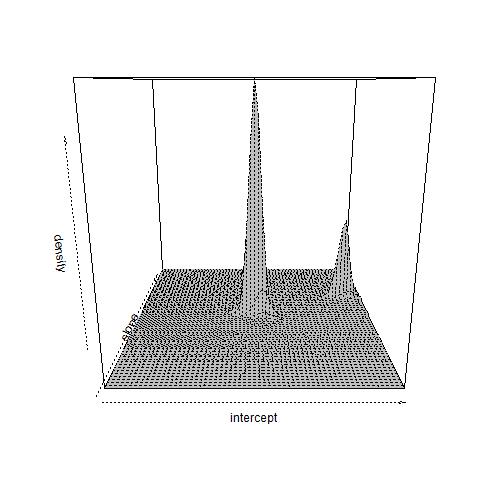}\\
\includegraphics[width=1.8in,height=1.4in]{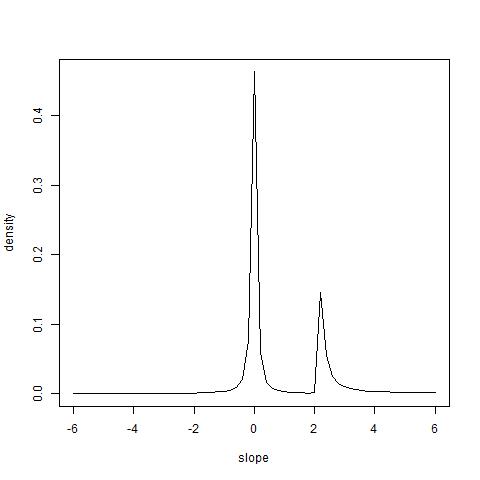}
\includegraphics[width=1.8in,height=1.4in]{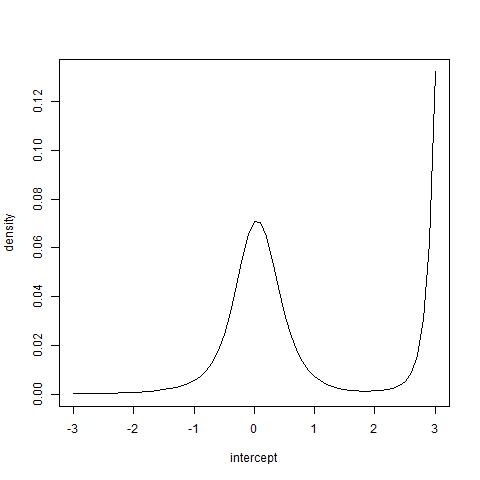}
\caption{Example 4 in Weak-dependent case. Gaussian process with constant drift. Top left, middle and right panel shows the estimated mixing densities for the slope, intercept and the joint mixing density. Bottom panel shows the marginal mixing densities for the case when the support for the initial distribution does not contain the true intercept. }
\lb{drift02}
\end{figure}

\section{$M$-dependent processes} 
An important subclass  of the weakly dependent processes are the $M$-dependent processes where $X_i$ and $X_j$ are independent if $|i-j|> M$ for some positive integer $M$.
Heuristically, if the process is $M$-dependent  for some  positive integer $M=q-1$, one expects the original PR algorithm to provide consistent estimates over $q$ different subsequences  where consecutive indices are $q$ apart and hence provides overall consistent estimator. Thus, one expects that the sufficient conditions assumed for general weakly dependent processes can be significantly relaxed. It is indeed the case and the proof for the $M$-dependent processes is also significantly different. Hence we present the case of $M$-dependent processes separately.

Consider an $M$-dependent  sequence $\{X_i\}_{i \in \mathbb{Z}}$ with fixed marginal distribution  $ X_i \sim m(x)$ of the form \eqref{marginal}.
From the definition of the process $X_i$, $X_{i+h}$ are independent for all $i$ and $h \geq q$ for some positive integer $q$. An example of such a  process would be  a $(q-1)$th order moving average, $MA(q-1)$, defined as 
\[  x_i = e_i + \psi_1 e_{i-1} + \cdots \psi_{q-1} e_{i - q +1}, \;\;\; i \in \mathbb{Z} \]
where $(\psi_1, \ldots, \psi_{q-1})$ are fixed parameters and $\{e_i\}$ are independent mean zero random variables with density $m_e(\cdot)$. Let the marginal density of $X_i$ be of the mixture form \eqref{marginal}. Note that the  $MA(q-1)$ process considered is stationary, but for the PR example we merely need the marginal density to not change with the index. 
As mentioned, the assumptions could be relaxed in the $M$-dependent case. The new assumptions are
\begin{itemize}

\item[A1] $w_n\downarrow 0$ as $n\uparrow \infty$; $\sum_{i=1}^\infty w_{j+ iq}=\infty$ for $j=1,2,\dots,q$; and $\sum_{i=1}^\infty w_i^2<\infty$.

\item[A2] The map $F \mapsto \int p(x|\theta) f(\theta) d \theta$ is injective; that is the mixing density is identifiable from the mixture $\int p(x | \theta) f(\theta) d\theta.$ 

\item[A3] The map $\theta \to p(x|\theta)$ is bounded and continuous for $ x \in \chi $.

\item[A4] For $\theta_1$, $\theta_2$ and $\theta_3$ in $\Theta$, $\int \{\frac{p(x|\theta_1)}{p(x|\theta_2)}\}^k p(x|\theta_3)\nu(dx)<B$  for some $B>0$ and  for $k\leq 4q$.
\end{itemize}

 The conditions needed for convergence in $q$ dependent sequence is similar to that of independent case other than [A4] and [A1]. Condition [A1] is needed, as we will look at $q$ gap martingale sum, instead of the subsequence construction in Theorem \ref{wdependent}, \ref{wdependent2} for the gradually vanishing dependence. Condition [A4] is needed to account for the difference for the conditional mean terms, which will involve $q$ fold products of terms similar to $A_1(X_i)$'s. 
 Under the assumed conditions we have the following result. 
\begin{thm}
Let $X_1, \ldots, X_n$ be a sample form a $(q-1)$-dependent process with marginal density $m$ of the form \eqref{marginal} with mixing density $f$. Assume \rm{A1-A4} and \rm{C1-C3} hold and $\Theta$ is compact. Then  the estimator  $F_n$ from equation \eqref{newton} converges weakly to the true mixing distribution $F$ with probability one. 
\lb{mdependent}
\end{thm}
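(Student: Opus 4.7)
The plan is to adapt the decomposition used in Theorem~\ref{wdependent2} but with the polynomially growing sub-index construction replaced by the obvious fixed partition of $\{1,2,\ldots\}$ into the $q$ residue classes modulo $q$. Set $i_{last}=i-q$ (with the convention $i_{last}=0$ for $i\le q$) and $\mathcal{F}_{last,i}=\sigma(X_1,\ldots,X_{i-q})$. Because the process is $(q-1)$-dependent and stationary, $X_i$ is \emph{exactly} independent of $\mathcal{F}_{last,i}$, so any conditional expectation $E[g(X_i)\mid\mathcal{F}_{last,i}]$ collapses to $\int g(x)m(x)\nu(dx)$. Mirroring \eqref{wkkl1} at the level of the marginal KL $K_n^{*}$, I write
\begin{equation*}
K_n^{*}-K_0^{*}=\sum_{j=1}^{q}S_{v,j}^{*,(n)}-\sum_{j=1}^{q}S_{M,j}^{*,(n)}+\sum_{i=1}^{n}S_i^{\Delta*}+\sum_{i=1}^{n}E_i^{*},
\end{equation*}
where, for each residue class $j\in\{1,\ldots,q\}$, $S_{v,j}^{*,(n)}$ collects the $\mathcal{F}_{last,\cdot}$-centered increments over indices $i\equiv j\pmod q$, $S_{M,j}^{*,(n)}$ collects the corresponding non-negative conditional-mean pieces built around $m_{i-q}$ (rather than $m_{i-1}$), $S_i^{\Delta*}$ absorbs the resulting discrepancy, and $E_i^{*}$ is the usual quadratic Taylor remainder.

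Within each residue class observations are i.i.d.\ with marginal $m$, so each $S_{v,j}^{*,(n)}$ is a mean-zero square-integrable martingale for the subsequence filtration. A uniform bound on the second moments of its increments is supplied by condition A4, Proposition~\ref{chaul_bd}, and C1--C3; combined with $\sum w_i^2<\infty$ from A1 this gives a.s.\ convergence of each $S_{v,j}^{*,(n)}$, and hence of the finite sum $\sum_j S_{v,j}^{*,(n)}$. The Taylor remainder $\sum_i E_i^{*}$ is controlled exactly as in the proof of Theorem~\ref{wdependent}: $\sum w_i^2<\infty$ together with the moment bound from A4 and C1--C3 gives $\sum_i E|E_i^{*}|<\infty$, hence a.s.\ convergence by Proposition~\ref{pos_sum}.

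The main technical obstacle is $\sum_i S_i^{\Delta*}$, which quantifies the mismatch between $m_{i-1}$ and $m_{i-q}$: the recursion naturally uses the former while the independence decomposition requires the latter. Telescoping,
\begin{equation*}
\frac{m_{i-1}(x)}{m_{i-q}(x)}=\prod_{l=1}^{q-1}\Big\{1+w_{i-l}\Big(\frac{\int p(X_{i-l}|\theta)p(x|\theta)f_{i-l-1}(\theta)\mu(d\theta)}{m_{f_{i-l-1}}(X_{i-l})m_{f_{i-l-1}}(x)}-1\Big)\Big\},
\end{equation*}
so after expanding the product $S_i^{\Delta*}$ becomes a sum of at most $2^{q-1}$ terms, each of the form $w_i\prod_{l\in T}w_{i-l}$ (with $|T|\ge 1$) multiplied by an $(|T|+1)$-fold product of ratios bounded pointwise by the $A_1$-type function of Proposition~\ref{chaul_bd}. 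The expectation of every such $(|T|+1)$-fold product is uniformly bounded by A4 (which is calibrated up to order $4q$, precisely accommodating $|T|+1\le q$), while the weight factor is $O(w_i^2)$ because $|T|\ge 1$. Summability of $E|S_i^{\Delta*}|$ now follows from $\sum w_i^2<\infty$, and Proposition~\ref{pos_sum} delivers a.s.\ convergence of $\sum_i S_i^{\Delta*}$. This is the step in which A4 is genuinely used.

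Combining the four pieces, $K_n^{*}\ge 0$ together with the a.s.\ finite limits of the three error series forces $\sum_j S_{M,j}^{*,(n)}$ to converge a.s. Since $M_i^{*}$ is a non-negative (chi-square-type) discrepancy between $m$ and $m_{i-q}$ and condition A1 gives $\sum_{i\equiv j\,(q)}w_i=\infty$ within every residue class, a Kronecker-type argument forces $M_i^{*}\to 0$ along a subsequence in each class, which in turn forces $K_{i-q}^{*}\to 0$ along that subsequence. A monotone clean-up identical to the end of the proof of Theorem~\ref{wdependent} (using that $K_n^{*}$ has an a.s.\ limit by the displayed identity) upgrades this to $K_n^{*}\to 0$ a.s. Finally, compactness of $\Theta$ and Helly's selection theorem yield a.s.\ weak subsequential limits $\tilde{F}$ of $F_n$; each induces a marginal $\tilde{m}$ with $KL(m,\tilde{m})=0$, so $\tilde{m}=m$. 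Identifiability (A2) together with continuity of $\theta\mapsto p(\cdot|\theta)$ (A3) then force $\tilde{F}=F$, proving $F_n\Rightarrow F$ with probability one.
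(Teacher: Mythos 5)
Your overall architecture --- partitioning the indices into the $q$ residue classes modulo $q$, exploiting that $X_i$ is exactly independent of $\sigma(X_1,\ldots,X_{i-q})$ so that the conditional expectations collapse to integrals against $m$, telescoping $m_{i-1}/m_{i-q}$ into at most $q-1$ factors, and controlling the resulting products via Proposition~\ref{chaul_bd} and A4 --- is exactly the paper's, and those parts are sound. The gap is in the final step, where you try to extract $K^*_{i-q}\to 0$ along a subsequence from the \emph{marginal} decomposition alone. In that decomposition the conditional-mean term is $M_i^{*}=\int_\chi\int_\chi \frac{h^{(q)}_{i,x'}(x)}{m_{i-q}(x)}m(x)m(x')\,\nu(dx)\,\nu(dx')-1=E_{f_{i-q}}\bigl[\bigl(\int \frac{p(x|\theta)}{m_{i-q}(x)}m(x)\nu(dx)\bigr)^{2}\bigr]-1$, i.e.\ the variance, under $f_{i-q}$, of the map $\theta\mapsto\int p(x|\theta)\,m(x)/m_{i-q}(x)\,\nu(dx)$. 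This is nonnegative by Jensen's inequality, but it is \emph{not} a $\chi^2$-type discrepancy between $m$ and $m_{i-q}$, and it is not bounded below by $K^*_{i-q}$: it can be small simply because that map is nearly constant on the support of $f_{i-q}$, without $m_{i-q}$ being anywhere near $m$. So the claim that $M_i^{*}\to 0$ along a subsequence ``forces $K^*_{i-q}\to 0$'' is unjustified, and the conclusion $K_n^{*}\to 0$ does not follow.

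The paper avoids this by running \emph{two} decompositions in parallel. From the mixing-density recursion for $K_n=\int f\log(f/f_n)\,d\mu$, the conditional-mean term is $M_{j,l}=\int\frac{m(x)}{m_{i-q}(x)}m(x)\nu(dx)-1\ \ge\ \int\log\frac{m(x)}{m_{i-q}(x)}m(x)\nu(dx)=K^*_{i-q}$ (by $t-1\ge\log t$); since $K_n\ge 0$ and A1 gives $\sum_l w_{j+ql}=\infty$ in each residue class, this yields $K^*_{i-q}\to 0$ along a subsequence. The marginal decomposition (the one you wrote) is then used only to show that $K_n^{*}$ \emph{converges}; combining the two gives $K_n^{*}\to 0$ almost surely. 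To repair your argument you must either add the $K_n$ decomposition --- whose martingale, remainder and discrepancy terms are controlled exactly as you controlled the marginal ones --- or prove a lower bound on $M_i^{*}$ by a function of $K^*_{i-q}$ vanishing only at zero, which the paper does not attempt and which fails in general. The remainder of your argument (Helly selection, identifiability via A2, continuity via A3) is fine once $K_n^{*}\to 0$ is secured.
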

\begin{proof}
Given in the Appendix.
\end{proof}

Analogous to the general case, a statement can be made about convergence under misspecification of support in the $M$-dependent case. 
Assume the set up of Theorem~\ref{wdependent22}.

\begin{thm}[Convergence under miss-specification]
Assume the $X_i$ are generated from a $(q-1)$-dependent process with fixed marginal density $m$. Assume \rm{A1-A4} and \rm{C1-C3} hold. Then  $\tilde{K}_n^* \rightarrow 0$ with probability one where $\tilde{K}_n^*$ is defined in Theorem~\ref{wdependent22}. 
\lb{mdependent22}
\end{thm}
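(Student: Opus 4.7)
The strategy is to adapt the $q$-gap decomposition used for Theorem~\ref{mdependent} to the misspecified setting, replacing the role of the true marginal $m$ in the drift analysis with that of the KL projection $\tilde m$. Since $\tilde k$ is a constant, $\tilde K^*_n-\tilde K^*_0=K^*_n-K^*_0$, and starting from the marginal recursion \eqref{ntnmrg} together with the identity $\log(1+u)=u-u^2 R(u)$ I would derive
$$\tilde K^*_n-\tilde K^*_0\;=\;-\sum_{i=1}^n w_i T_i\;+\;\sum_{i=1}^n E^*_i,\qquad T_i=\int\frac{h_{i,X_i}(x)}{m_{i-1}(x)}\,m(x)\,d\nu(x)-1,$$
with $E_i^*$ the remainder appearing after \eqref{kl2}. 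I would then split $T_i=V_i^*+\tilde M_i^*$, where $\tilde M_i^*=E[T_i\mid\mathcal F_{i-q}]$, $V_i^*=T_i-\tilde M_i^*$, and $\mathcal F_j=\sigma(X_1,\dots,X_j)$.

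Next I would dispatch the two easier pieces. Partitioning $\{1,\dots,n\}$ by residue modulo $q$ gives $q$ arithmetic progressions along which $(q-1)$-dependence makes $X_i$ independent of $\mathcal F_{i-q}$; consequently along each progression the partial sums of $w_iV_i^*$ form a square-integrable martingale with respect to the sub-filtration $\{\mathcal F_{i-q}\}$. The second-moment bound on $V_i^*$ follows from A4 together with Proposition~\ref{chaul_bd} and C1--C3, and square-summability of the weights from A1, so each sub-martingale converges almost surely; combining the $q$ residue classes yields a.s.\ convergence of $\sum w_i V_i^*$. For the remainder, the bound $0\le R\le\max\{1,(1+u)^{-2}\}/2$ combined with Proposition~\ref{chaul_bd} and A4 gives $E|E_i^*|=O(w_i^2)$, so $\sum E_i^*$ converges absolutely a.s.

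The crux is the drift $\tilde M_i^*$. Using independence of $X_i$ from $\mathcal F_{i-q}$ and carrying out the conditional integration against the joint law of $X_{i-q+1},\dots,X_i$, a Cauchy--Schwarz computation of the type appearing after \eqref{wdep_decmp} yields $\tilde M_i^*\ge 0$, with equality characterised by $\int p(x|\theta)m(x)/m_{i-1}(x)\,d\nu(x)$ being constant in $\theta$ on $\mathrm{supp}(f_{i-1})$. The Gateaux optimality of $\tilde f$ as the unique minimiser of $KL(m,\cdot)$ over the convex class $\mathcal M_0$ (cf.\ \citet{patilea2001convex}) supplies exactly the dual condition $\int p(x|\theta)m(x)/\tilde m(x)\,d\nu(x)=1$ on $\mathrm{supp}(\tilde f)$ with $\le 1$ on $\Theta_0$. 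A one-sided Taylor expansion of the inner integral at $m_{i-1}=\tilde m$, combined with the uniform boundedness of ratios $m(x)/m_{i-1}(x)$ afforded by Proposition~\ref{chaul_bd}, then produces a lower bound of the form $\tilde M_i^*\ge c\,\tilde K^*_{i-1}-\eta_i$ with $\sum w_i E|\eta_i|<\infty$.

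Combining the three pieces, since $\tilde K_n^*\ge 0$ and $\sum w_iV_i^*$, $\sum E_i^*$, $\sum w_i\eta_i$ all converge a.s., while $\sum w_i=\infty$ by A1, one forces $\tilde K^*_{i-1}\to 0$ along a subsequence. Uniqueness of $\tilde f$ (ensured by A2 plus convexity of $\mathcal F_0$) and continuity of KL divergence with respect to weak topology then upgrade subsequential convergence to convergence of the full sequence, in the same manner as the final paragraph of the proof of Theorem~\ref{wdependent}. The main obstacle I anticipate is making the Gateaux-derivative lower bound $\tilde M_i^*\gtrsim \tilde K^*_{i-1}-\eta_i$ rigorous uniformly in $i$: the random densities $m_{i-1}$ need not lie near $\tilde m$, so translating the dual optimality condition at $\tilde m$ into a usable bound at an arbitrary $m_{i-1}\in\mathcal M_0$ requires careful use of the convexity of KL divergence in its second argument together with the bounded likelihood ratios provided by Proposition~\ref{chaul_bd} and B3.
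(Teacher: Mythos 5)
There is a genuine gap at the step you yourself flag as the crux. Your decomposition is entirely in terms of the marginal KL divergence: since $\tilde K^*_n-\tilde K^*_0=K^*_n-K^*_0$, your drift term $\tilde M_i^*=E[T_i\mid\mathcal F_{i-q}]$ is exactly the drift $M_i^*$ of the well-specified marginal recursion, which (after conditioning, using independence of $X_i$ from $\mathcal F_{i-q}$) equals
\begin{equation*}
E_{f_{i-q}}\Bigl[\Bigl(\int \tfrac{p(x|\theta)}{m_{i-q}(x)}\,m(x)\,d\nu(x)\Bigr)^2\Bigr]-1=\mathrm{Var}_{f_{i-q}}\bigl(g(\theta)\bigr),\qquad g(\theta)=\int \tfrac{p(x|\theta)}{m_{i-q}(x)}m(x)\,d\nu(x).
\end{equation*}
This is nonnegative by Jensen, which gives convergence of $K_n^*$ to \emph{some} limit, but it carries no information about $\tilde k$. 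To identify the limit you propose the bound $\tilde M_i^*\ge c\,\tilde K^*_{i-1}-\eta_i$ via the Gateaux/dual optimality condition at $\tilde m$, i.e.\ a quantitative reverse inequality relating the variance of $g$ under $f_{i-q}$ to the KL gap at an \emph{arbitrary} $m_{i-q}\in\mathcal M_0$. You correctly note you cannot make this rigorous; the paper never establishes such a bound and does not need one. As written, your argument only yields $\mathrm{Var}_{f_{i-q}}(g)\to 0$ along a subsequence (a first-order stationarity statement), and the passage from there to $K^*\to\tilde k$ is exactly the missing piece.

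The paper's device, which you miss, is to run a \emph{second} recursion for the mixing-density divergence $\tilde K_n=KL(\tilde f,f_n)$, obtained by writing $\log(f_i/f_{i-1})=\log(f_i/\tilde f)-\log(f_{i-1}/\tilde f)$ and integrating against $\tilde f$ (equation \eqref{q_dep_decomp22}). The drift of that recursion is $E[\tilde m(X_i)/m_{i-1}(X_i)-1\mid\cdot\,]$, and the elementary inequality $u-1\ge\log u$ gives
\begin{equation*}
\tilde M_{j,l}\;\ge\;\int\log\frac{\tilde m(x)}{m_{j,l-1}(x)}\,m(x)\,d\nu(x)\;=\;KL(m,m_{j,l-1})-\tilde k\;=\;\tilde K^*_{j,l-1}\;\ge\;0,
\end{equation*}
the last inequality holding simply because $\tilde k$ is the infimum over $\mathcal M_0$ and $m_{j,l-1}\in\mathcal M_0$ --- no duality, no Taylor expansion, no uniformity in $i$ is required. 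Since $\tilde K_n\ge 0$ and the martingale, difference, and remainder terms of \eqref{q_dep_decomp22} all converge (by the same estimates as in Theorem \ref{mdependent}), $\sum_l w_{j,l}\tilde K^*_{j,l-1}<\infty$, and $\sum_l w_{j,l}=\infty$ from A1 forces $\tilde K^*\to 0$ along a subsequence; the a.s.\ convergence of the full sequence $K_n^*$, already proved via the marginal decomposition \eqref{mdepmarg} in Theorem \ref{mdependent}, then upgrades this to $\tilde K_n^*\to 0$. Your martingale and remainder estimates are fine and match the paper's, but without the $KL(\tilde f,f_n)$ decomposition (or a rigorous substitute for your Gateaux lower bound) the proof is incomplete.
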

The proof is given in the Appendix.

\subsection{Convergence rate for $M$-dependent case}
Next we investigate the convergence rate of the PR algorithm in the $M$-dependent case.
We will assume $\rm{B1}'$ as the decay rate for the weights $w_i$. 
\begin{thm}
For  unique $\tilde{f}$, $w_i \sim i^{-\gamma}$ $\gamma \in (3/4,1)$, under the setting of Theorem \ref{mdependent22}, for $\gamma' \in (\gamma,1)$, $n^{-\gamma'+1}\tilde{K}_n^* \rightarrow 0$ with probability one and $H(m_{\tilde{f}},m_n)=o(n^{-(1-\gamma')/2})$,  if $\frac{m}{m_{\tilde{f}}}$ is bounded away from zero and infinity. 
\lb{rate_thm1}
\end{thm}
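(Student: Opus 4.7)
The plan is to mimic the argument used for Theorem~\ref{rate_thm2}, replacing the gradually growing sub-sequences of the weakly dependent case by the $q$ fixed $q$-gap sub-sequences used in Theorems~\ref{mdependent} and~\ref{mdependent22}. As in the proof of Theorem~\ref{rate_thm2}, the central object will be the rescaled divergence $a_n \tilde K_n^*$ where $a_n=\sum_{j=1}^n w_j\sim n^{1-\gamma}/(1-\gamma)$. The target is to show that $a_n\tilde K_n^*$ converges almost surely to a finite limit; once this is established, $\tilde K_n^*=O(a_n^{-1})=O(n^{\gamma-1})$ a.s., and the stated rate follows from $n^{1-\gamma'}\tilde K_n^*=O(n^{\gamma-\gamma'})\to 0$ whenever $\gamma'>\gamma$.

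First, I would multiply the one-step recursion for $\tilde K_i^*-\tilde K_{i-1}^*$ used inside the proof of Theorem~\ref{mdependent22} by $a_i$ and sum, telescoping to a decomposition of the form
\begin{eqnarray*}
a_n\tilde K_n^* - a_0\tilde K_0^* = \sum_{i=1}^n w_i\tilde K_{i-1}^* + \sum_{l=1}^{q}\hat S_{v,l}^{*,(n)} - \sum_{l=1}^{q}\hat S_{M,l}^{*,(n)} + \sum_{i=1}^n a_i S_i^{\Delta*} + \sum_{i=1}^n a_i E_i^*,
\end{eqnarray*}
where $\hat S_{v,l}^{*,(n)}$ and $\hat S_{M,l}^{*,(n)}$ are respectively the centred and the non-negative conditional-mean parts gathered over the $l$th $q$-gap sub-sequence $\{l,l+q,l+2q,\ldots\}\cap\{1,\ldots,n\}$, $S_i^{\Delta*}$ captures the telescoping correction produced by conditioning on $\mathcal{F}_{i-q}$ versus $\mathcal{F}_{i-1}$ (analogous to the $S_i^{\Delta}$ in Theorem~\ref{wdependent}), and $E_i^*$ is the log-expansion remainder introduced following~\eqref{kl2}.

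Next I would verify almost sure convergence of each of the rescaled pieces under $\rm{B1}'$. With the effective weights $w_i'=a_iw_i\sim i^{1-2\gamma}$, the requirement $\gamma>3/4$ gives $\sum_i (w_i')^2<\infty$, so on each of the $q$ sub-sequences $\hat S_{v,l}^{*,(n)}$ is an $L^2$-bounded martingale (its increments, which are differences of $a_iw_i$ times mean-zero bounded-moment terms, have $L^2$ norms controlled by A4 together with C1--C3 bounding $E[A_1(X_i)^k]$ for $k\leq 4q$), and therefore converges a.s. The remainder sum $\sum a_i E_i^*$ is controlled by $\sum a_i w_i^2 E[(1+A_1(X_i))^2]$, and since $a_iw_i^2\sim i^{1-3\gamma}$ is summable for $\gamma>3/4$, it converges absolutely. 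The correction $\sum a_i S_i^{\Delta*}$ involves at most $q$-fold products of likelihood ratios with coefficient of order $a_i w_i^{q+1}$; by A4 the $q$-fold product has bounded expectation, and the coefficient sum is again absolutely convergent under $\gamma>3/4$.

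The main obstacle is to handle the two coupled terms $\sum w_i\tilde K_{i-1}^*$ and $\sum_l \hat S_{M,l}^{*,(n)}$. From the proof of Theorem~\ref{mdependent22} one already knows $\sum_i w_i\tilde K_{i-1}^*$ converges a.s., since the analogous term was shown to be summable when establishing that $\tilde K_n^*\to 0$. Rearranging the displayed identity and using the a.s.\ convergence of every other piece on the right-hand side forces $\sum_l \hat S_{M,l}^{*,(n)}$ to converge a.s. Each $\hat S_{M,l}^{*,(n)}$ is a partial sum of non-negative increments (recall $M_i^*\geq 0$, cf.\ the decomposition analogous to~\eqref{wdep_decmp}), so convergence of this positive-increment sum together with the other convergences forces $a_n\tilde K_n^*$ to converge to a finite limit almost surely. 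Hence $\tilde K_n^* = O(n^{\gamma-1})$ a.s., and $n^{-\gamma'+1}\tilde K_n^*\to 0$ for $\gamma'>\gamma$.

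Finally, for the Hellinger conclusion, observe
\[ \tilde K_n^* \;=\; K(m,m_n)-K(m,m_{\tilde f}) \;=\; \int m\,\log\!\bigl(m_{\tilde f}/m_n\bigr)\,d\nu . \]
When $m/m_{\tilde f}$ is bounded above and below, one can replace the weighting density $m$ by $m_{\tilde f}$ at the cost of a multiplicative constant, giving $K(m_{\tilde f},m_n)\leq C\,\tilde K_n^*$ for some $C<\infty$. Since $H^2\leq K$, this yields $H^2(m_{\tilde f},m_n)=o(n^{\gamma'-1})$, i.e.\ $H(m_{\tilde f},m_n)=o(n^{-(1-\gamma')/2})$, completing the argument.
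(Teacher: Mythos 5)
Your proposal is correct and follows essentially the same route as the paper's proof: rescale by $a_n=\sum_{i\le n}w_i$, telescope the one-step recursion over the $q$ fixed $q$-gap subsequences to obtain the same five-term decomposition of $a_n\tilde K_n^*$, verify square-integrability of the martingale parts with the effective weights $a_iw_i\sim i^{1-2\gamma}$ (summable squares for $\gamma>3/4$), control the remainder and correction sums via A4 and C1--C3, and force convergence of the non-negative conditional-mean sums to conclude $a_n\tilde K_n^*$ converges a.s. The Hellinger step, including the replacement of the weighting density $m$ by $m_{\tilde f}$ under the boundedness hypothesis, also matches the paper's argument.
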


The proof of Theorem \ref{rate_thm1}  follows from   argument similar to that of    Theorem \ref{rate_thm2}  and given in the Appendix.

\section{Discussion}
We have established consistency  of the solution of the predictive recursion algorithm under various dependence scenarios. For the dependent cases considered, we have also explored convergence rate for the solution to the predictive recursion algorithm.  The  theoretical development provides justification for using the original algorithm in many cases.
 Under stationarity but  misspecification   about dependence structure the original algorithm continues to work as long as the dependence decays reasonably fast.  Best possible nonparametric rate for dependent cases may be an interesting problem to explore and conditions for feasibility of minimax rate needs to be studied.


The proposed theoretical development justifies the possible use of stochastic approximation even when we have error in observations coming from an moving average or autoregressive mean zero distribution, if certain conditions are satisfied. It is well known that stochastic approximation or predictive recursion algorithms do not give a posterior estimate. However,  similar  extension for posterior consistency under the misspecification of independence under conditions analogous to equation \eqref{strdep} may be explored.

\section{Acknowledgement}
 The first author Nilabja Guha is supported by the NSF grant \#2015460.

\setcounter{section}{0}
\renewcommand{\thesection}{\Alph{section}}

\section{ Appendix}
We first prove a simple proposition which we use throughout the proofs.  This is a standard result from probability theory, which we restate and prove  for convenience. 

\begin{proposition}
 Let $Z_1,Z_2,\dots $ be a sequence of random variables such that $\sum_{i=1}^\infty E[|Z_i|]<\infty$. Then $\sum_{i=1}^nZ_i$ converges almost surely and the limit is finite almost surely.
\lb{pos_sum}
\end{proposition}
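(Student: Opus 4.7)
The plan is to reduce the statement to the standard fact that a non-negative random variable with finite expectation is almost surely finite, combined with the elementary observation that absolute convergence of a real series implies convergence. First I would consider the non-negative random variables $|Z_i|$ and, using either the monotone convergence theorem applied to the increasing sequence of partial sums $\sum_{i=1}^n |Z_i|$, or equivalently Tonelli's theorem for the counting measure on $\mathbb{N}$, interchange the expectation and the infinite sum to obtain
\[
E\Bigl[\sum_{i=1}^\infty |Z_i|\Bigr] \;=\; \sum_{i=1}^\infty E[|Z_i|] \;<\; \infty.
\]

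From here I would argue that since the random variable $S := \sum_{i=1}^\infty |Z_i|$ is non-negative and has finite expectation, Markov's inequality (or simply the fact that $\{S = \infty\}$ would contradict $E[S] < \infty$) yields $P(S < \infty) = 1$. Thus outside a null set $N$, the series $\sum_{i=1}^\infty Z_i$ is absolutely convergent in $\mathbb{R}$, and therefore the partial sums $\sum_{i=1}^n Z_i$ converge to a finite real limit.

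There is essentially no obstacle here; the only step requiring any care is justifying the exchange of expectation and infinite sum, which is routine because the summands $|Z_i|$ are non-negative. The conclusion of the proposition then follows immediately on the complement of the null set $N$.
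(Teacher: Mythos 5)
Your proposal is correct and follows essentially the same route as the paper: apply the monotone convergence theorem (equivalently Tonelli) to the partial sums of $|Z_i|$ to conclude $E\bigl[\sum_{i=1}^\infty |Z_i|\bigr]<\infty$, deduce that $\sum_{i=1}^\infty |Z_i|$ is finite almost surely, and then use the fact that absolute convergence implies convergence. The paper's proof is the same argument, with Fatou's lemma mentioned as an alternative justification for the interchange.
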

\begin{proof}
Let $\Omega$ be the probability space corresponding to the joint distribution.  For some $\omega \in \Omega$, if $\sum |Z_i(\omega)| $converges then $\sum Z_i(\omega)$ converges.  Let $Z_\infty$ be the limit of $\sum |Z_i(\omega)| $ which is defined to be infinity at some $\omega$  incase $\sum |Z_i(\omega)|$ diverges to positive infinity.

By Monotone Convergence Theorem, $E[Z_\infty]=\sum_{i=1}^\infty E[|Z_i|]<\infty$ (or equivalently  can be argued using Fatou's lemma on the sequence of partial sums $\sum_{i=1}^n|Z_i|$). Hence $Z_\infty$ is finite with probability one. Therefore, $ \sum_{i=1}^\infty |Z_i|$ converges with probability one and hence $ \sum_{i=1}^\infty Z_i$ converges with probability one.
\end{proof}

\begin{lemma}
From equation \eqref{wkkl1}, $P(\sup_n | S_{v,j}^{(n)}|>\frac{\epsilon}{j^s})\leq C'_0\epsilon^{-2}j^{-K(2\alpha-1)+2s-1}$,  for $C'_0>0$,  some universal constant not depending on $j$.
 \lb{sup_lem}
\end{lemma}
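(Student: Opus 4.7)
The plan is to recognize $\{S_{v,j}^{(n)}\}_n$ as a centered square-integrable martingale in disguise and apply Doob's $L^2$ maximal inequality together with orthogonality of its increments. For fixed $j$ the sum $S_{v,j}^{(n)}$ only changes as $n$ crosses a position of the form $\Psi(r,j)$, so after reindexing $\tilde S_r = \sum_{r'=1}^{r} v_{\Psi(r',j)}$ one has $\sup_n |S_{v,j}^{(n)}| = \sup_r |\tilde S_r|$. By definition $v_i$ is $w_i$ times the centering of $1 - m(X_i)/m_{i-1}(X_i)$ against $F_{last,i}$, and for $i=\Psi(r,j)$ with $r>1$ the convention $i_{last}=\Psi(r-1,j)$ makes $F_{last,i}$ coincide with the natural $\sigma$-algebra $\mathcal{G}_{r-1} = \sigma(X_1,\ldots,X_{\Psi(r-1,j)})$ at stage $r-1$ of $\tilde S$. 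Hence $\{\tilde S_r\}$ is a martingale with respect to $\{\mathcal{G}_r\}$.

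Doob's $L^2$ maximal inequality combined with orthogonality of the martingale differences then yields
\[ P\!\left(\sup_n |S_{v,j}^{(n)}| > \epsilon/j^s\right) \;\le\; \frac{4 j^{2s}}{\epsilon^2}\sum_{r\ge 1} E\!\left[v_{\Psi(r,j)}^2\right]. \]
Each variance is controlled by $E[v_i^2] \le w_i^2 E[(1 + m(X_i)/m_{i-1}(X_i))^2]$. Proposition~\ref{chaul_bd} gives $m(X_i)/m_{i-1}(X_i) \le A_1(X_i)$, while condition $[\rm{B3}]$ bounds the pairwise moments $E[(p(X_i|\theta_k)/p(X_i|\theta_l))^2]$ appearing in the expansion of $A_1(X_i)^2$ via \eqref{chaul}. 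Consequently $E[v_i^2] \le C_1 w_i^2$ uniformly in $i$.

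The crucial step is to sum $\sum_r w_{\Psi(r,j)}^2$ sharply. I would exploit not only $\Psi(1,j) > j^K$ but also the fact that consecutive members of the $j$-th subsequence are separated by at least on the order of $j$ positions: between $\Psi(r,j)$ and $\Psi(r+1,j)$ the interleaving rule must insert at least one representative from every subsequence of index less than $j$, forcing $\Psi(r+1,j)-\Psi(r,j) \gtrsim j$. Thus $\Psi(r,j) \gtrsim j^K + rj$, and for $\alpha \in (1/2,1]$ an integral comparison yields
\[ \sum_{r \ge 1} w_{\Psi(r,j)}^2 \;\le\; C \sum_{r \ge 1} (j^K + r j)^{-2\alpha} \;\le\; \frac{C'}{j}\int_0^\infty (j^K + x)^{-2\alpha}\,dx \;\le\; C'' j^{-1-K(2\alpha-1)}. \]
Combining with the Doob bound gives the claimed $C_0'\epsilon^{-2} j^{-K(2\alpha-1)+2s-1}$.

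The main obstacle I anticipate is making the sparsity estimate $\Psi(r+1,j)-\Psi(r,j) \gtrsim j$ rigorous across the whole range of $r$, since the fallback rule in the construction (the first subsequence absorbs overflow when higher-indexed subsequences are momentarily unavailable) complicates the round-robin count. Once this geometric gap is in hand, the remaining ingredients---the identification of $\{\tilde S_r\}$ as a martingale relative to $\{\mathcal{G}_r\}$, and the uniform variance bound from Proposition~\ref{chaul_bd} together with $[\rm{B3}]$---are direct from the definitions.
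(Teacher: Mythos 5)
Your proof is correct and follows essentially the same route as the paper's: identification of $S_{v,j}^{(n)}$ as a square-integrable martingale, Doob's maximal inequality, the uniform variance bound $E[v_i^2]\leq C_1 w_i^2$ from Proposition~\ref{chaul_bd} together with [B3], and a counting estimate yielding $\sum_r w_{\Psi(r,j)}^2 \leq C'' j^{-1-K(2\alpha-1)}$. The gap bound $\Psi(r+1,j)-\Psi(r,j)\geq j$ that you flag as the main obstacle is in fact immediate from the round-robin interleaving (each of subsequences $1,\dots,j-1$ contributes exactly one index between consecutive members of the $j$th subsequence, regardless of the overflow rule), and it is just a reorganization of the paper's observation that within each block the subsequence-$j$ term carries the smallest of at least $j$ weights.
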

\begin{proof}
From the derivation after equation \eqref{wkkl1}  and using Proposition  \ref{chaul_bd}, $E[v_{\Psi(i,j)}^2]\leq  E[2w_{\Psi(i,j)}^2(1+A_1(X_{\Psi(i,j)})^2]\leq c_0^{''}w^2_{\Psi(i,j)}$, for some universal constant $c_0^{''}>0$, using condition A4.  From the  martingale construction of the equation \ref{martcon} , $\Psi(1,j)>j^K$.
 By construction, the coefficients  belonging to each block ${\bf \tilde{B}}_l$ is less for the higher the index $j$ is, that is if $\Psi(l_1,j_1)$ and $\Psi(l_2,j_2)$ is in ${\bf \tilde{B}}_l$ then $w_{\Psi(l_1,j_1)}>w_{\Psi(l_2,j_2)}$ if $j_2>j_1$.  Hence, $E[(S_{v,j}^{(n)})^2]\leq j^{-1}c^{''}_0\sum_{l\geq\Psi(1,j)} w_l^2 \leq c^{''}_0j^{-1} \sum_{l\geq\Psi(1,j)} l^{-2\alpha}<c_0'j^{-1}{\Psi(1,j)}^{-2\alpha+1}<C_0'j^{-1}j^{-2K\alpha+K}$, where $c^{''}_0,c_0',C_0'$ are  universal constants.


%
%
%

 Finally, using Doob's maximal inequality \citep{durrett2019probability},  \[P(\sup_n | S_{v,j}^{(n)}|>\frac{\epsilon}{j^s})=lim_{n_0\uparrow \infty}P(\sup_{n<n_0} | S_{v,j}^{(n)}|>\frac{\epsilon}{j^s}) \leq C'_0\epsilon^{-2}j^{-K(2\alpha-1)+2s-1}.\]
\end{proof}

\subsection{\bf  Proof of  Proposition \ref{chaul_bd}}
\begin{proof}
Note that $\frac{m_{f_1}(X_i)}{m_{f_2}(X_i)}\leq \frac{ p(X_i|\theta_1)}{p(X_i|\theta_2)}\leq A_1(X_i)$, where at $\theta_2$ and $\theta_1$, $p(X_i|\theta)$ is minimized and maximized, respectively when $X_i \notin \chi_c$ (note that $p(x|\theta)$ continuous function on a compact set). 

If $X_i\in \chi_c$ then $\frac{m_{f_1}(X_i)}{m_{f_2}(X_i)} \leq \frac{b }{a}\leq A_1(X_i)$. 

\end{proof}

\subsection{\bf Proof of Theorem \ref{wdependent2}}
\begin{proof}
Following \eqref{kl2} and the decomposition \eqref{wkkl1} in the dependent case,  we can have an analogous KL decomposition for the marginals in the dependent case. We have, 
\begin{eqnarray}
K_n^*-K_0^*=\sum_{j=1}^{{L_n}} S_{v,j}^{*,(n)}-\sum_{j=1}^{L_n} S_{M,j}^{*,(n)}+\sum_{i=1}^nS^{\Delta*}_i+\sum_{i=1}^n E^*_i . 
\lb{wkkl2}
\end{eqnarray}
Here
\begin{eqnarray}
 S_{v,j}^{*,(n)}&=&\sum_{i=1}^{N_{j,n}}v^*_{\Psi(i,j)}\nonumber\\
S_{M,j}^{*,(n)}&=&\sum_{i=1}^{N_{j,n}}w_{\Psi(i,j)}M^*_{\Psi(i,j)}\nonumber\\
v_{i}^*&=&w_i\Big((1-\int\frac {h_{i,X_i}(x)}{m_{i-1}(x)}m(x)\nu(dx))-E [1-\int_{\chi}  \frac{h_{i,X_i}(x)}{m_{i-1}(x)}m(x)\nu(dx) |\mathcal{F}_{last,i}]\Big)\nonumber\\
M^*_{i}&=&E [\int_{\chi}  \frac{h^{(i_{last})}_{i,X_i}(x)}{m_{i_{last}}(x)}m(x)\nu(dx) -1|\mathcal{F}_{last,i}];
h^{(i_{last})}_{i,x'}(x) \nonumber \\
&=&\frac{\int p(x'|\theta)p(x|\theta)f_{i_{last}}(\theta)d\mu( \theta)}{m_{i_{last}}(x')}\nonumber\\
S^{\Delta*}_i&=&w_iE [\int_{\chi}  \frac{h^{(i_{last})}_{i,X_i}(x)}{m_{i_{last}}(x)}m(x)\nu(dx) -1|\mathcal{F}_{last,i}] - \nonumber \\
&& w_iE [\int_{\chi}  \frac{h_{i,X_i}(x)}{m_{i-1}(x)}m(x)\nu(dx) -1|\mathcal{F}_{last,i}],\nonumber
 \lb{wkkl2_decomp}
\end{eqnarray}
where $h_{i,x'}(x)$ is defined in \eqref{kl2}.
From  the proof of Theorem \ref{wdependent} it follows that the sequence $K_n$ converges and $K_n^*$ converges to zero over a subsequence with probability one, as in that case $\Theta$ was finite. Therefore, we first show that $K_n^*$ converges to zero with probability one by showing that   $K_n^*$ converges almost surely.  

{ \underline{\it Convergence of $\sum_{j=1}^{{L_n}} S_{v,j}^{*,(n)}$:}} 
From earlier calculations, $|1-\int\frac {h_{i,X_i}(x)}{m_{i-1}(x)}m(x)\nu(dx)|\leq 1+A_1(X_i)$.  Hence, convergence of $\sum_{j=1}^{{L_n}} S_{v,j}^{*,(n)}$ follows exactly same as in the same way as  convergence of $\sum_{j=1}^{{L_n}} S_{v,j}^{(n)}$ in Theorem \ref{wdependent}.  As,  $S_{v,j}^{*,(n)}$ are squared integrable martingales each of them converge almost surely. 
From the fact,  that $|1-\int\frac {h_{i,X_i}(x)}{m_{i-1}(x)}m(x)\nu(dx)|\leq 1+A_1(X_i)$ we have $E[{v^*_{\Psi(i,j)}}^2]\leq  E[2w_{\Psi(i,j)}^2(1+A_1(X_{\Psi(i,j)}^2)]\leq c_0w^2_{\Psi(i,j)}$ for some fixed $c_0>0$. Therefore, argument similar to those in  Lemma \ref{sup_lem}, we have 
\[P(\sup_n | S_{v,j}^{(*,n)}|>\frac{\epsilon}{j^s})\leq c'_0 j^{-K(2\alpha-1)+2s-1},\] for $c'_0>0$.
Hence, only finitely many martingales make contribution to the tail sum with probability one as $\sum _j P(\sup_n | S_{v,j}^{*,n}|>\frac{\epsilon}{j^s})<\infty$ and $\sup_n | S_{v,j}^{*,n}|<\frac{\epsilon}{j^s}$ for all but finitely many $j$'s with probability one with $s>1$.  Thus,  $\sum_{j=1}^{{L_n}} S_{v,j}^{*,(n)}$ is Cauchy almost surely and therefore, convergent almost surely.

 \noindent \underline{\it Convergence of $\sum {S_i^{\Delta}}^*$:} 
 Let, 
\beqa
\Delta^*(X_i,x)&=& w_i|\frac{h_{i,X_i}^{(i_{last})}(x)}{m_{i_{last}}(x)}-\frac{h_{i,X_i}(x)}{m_{i-1}(x)}| \\
&\leq&  w_i\Huge\{\int_{\theta} |\frac{p(X_i|\theta)p(x|\theta)f_{i_{last}}(\theta)}{m_{i-1}(X_i)m_{i-1}(x)}(\frac{m_{i-1}(x)m_{i-1}(X_i)}{m_{i_{last}}(X_i)m_{i_{last}}(x)}-1)| d\mu (\theta)\\
&+&\int_{\theta}|\frac{p(X_i|\theta)p(x|\theta)}{m_{i-1}(X_i)m_{i-1}(x)}{f_{i_{last}}(\theta)}(\frac{f_{i-1}(\theta)}{f_{i_{last}}(\theta)}-1)|d\mu (\theta)\Huge\} 
\eeqa
and let $\int \Delta^*(X_i,x) m(x)\nu(dx)=\Delta^*(X_i)$. 
Note that 
\[\frac{m_{i-1}(x)}{m_{i_{last}}(x)}=\prod_{j=i_{last}}^{i-2}(1+w_j(\frac{\int p(X_j|\theta)p(x|\theta)f_j(\theta)d\theta}{m_{f_j}(x)m_{f_j}(X_j)}-1))\leq \prod_{j=i_{last}}^{i-2}(1+w_j(A_1(X_j)+1)) .\]
Hence, in the $l$ fold products of $A_1(X_{j_l})$'s where the index $j_l$ appears at most twice in $(\frac{m_{i-1}(x)m_{i-1}(X_i)}{m_{i_{last}}(X_i)m_{i_{last}}(x)}-1)$.
In $S^{\Delta*}_i$  the terms are  products of  terms like $\frac{p(X_i | \theta_{i_1})}{p(X_j | \theta_{i_2})}$ and $\frac{p(x | \theta_{j_1})}{p(x | \theta_{j_2})}$ where $\theta_{i_1}, \theta_{i_2}, \theta_{j_1}, \theta_{j_2}$ are in ${\Theta}_H$.    Let $q_i=i-i_{last}$.  Using  $\rm{B2}'$ and Holder's inequality, expectation of any $l$ fold such product would be bounded by $b_3^l$. The number of such products of $A_1(\cdot)$ is less than $q_i^l$, $q_i=o(i^{1/K+\epsilon})$ for any $\epsilon>0$.   Also, $l$ fold product contains $n_H^l$ many terms for which expectation can be bounded by $B_3^l$ for $B_3 > 0$. Hence, $E[|{S_i^\Delta} ^*|]\leq E|\Delta^*(X_i)|\leq  Cw_i\sum_{l=1}^\infty i^{-l(\alpha_1-1/K)}\tilde{B}_3^l<C_1i^{-2\alpha_1'}$ for some universal constants, $C,C_1$ and $\tilde{B}_3$ greater than zero, for $.5<\alpha_1'<\alpha_1<\alpha$ and large enough $K$. 
Thus, $\sum E[|{S_i^\Delta} ^*|] \preceq \sum i^{-2\alpha_1'}<\infty$, where for sequences $\{a_n\}_{n\geq 1}$, $\{b_n\}_{n\geq 1}$,$a_n,b_n>0$,  $a_n\preceq b_n$ implies that  $a_n\leq C_0'b_n$ for some $C_0'>0$. Hence,  $\sum {S_i^{\Delta}}^*$ converges with probability one. 

 \noindent \underline{\it Convergence of $\sum w_iM_i^*$:} 
 Note that \[M_i^*=\int \delta_i^*(x') m(x')dx'+\int \delta_i^*(x')(\frac{p_{i,last}}{(m(x)}-1)m(x')\nu(dx')\]\[=I+II\]
 where $\delta_i^*(x') = [\int_{\chi}  \frac{h^{(i_{last})}_{i,x'}(x)}{m_{i_{last}}(x)}m(x)\nu(dx) -1]$.  Using Cauchy-Schwartz inequality and  condition B3, the expectation of the second term is bounded by $C'\rho^{i-i_{last}}$ from some  $C'>0$. Number of times $(i-i_{last})$ is equal to $l$ is  less than $(l+1)^K$ where $K$ is defined in the martingale construction  \ref{martcon}.  Therefore, for $II$ the sum over all $i$ is absolutely convergent. 
 
The first term, 
\begin{eqnarray*}
I = \int_{\chi}   \delta_i^*(x') m(x')\nu(dx')&=& \int_{\chi}[\int_{\chi}\frac {h^{(i_{last})}_{i,x'}(x)}{m_{i_{last}}(x)}m(x)\nu(dx)\nu(dx')]-1 \nonumber \\
& =&E_{f_{i_{last}}}[\large(\int_{\chi}  \frac{p(x|\theta)}{m_{i_{last}}(x)} m(x)\nu(dx)\large)^2]-1  \nonumber\\
&\geq&  \large( E_{f_{i_{last}}}[\int  \frac{p(x|\theta)}{m_{i_{last}}(x)} m(x)\nu(dx)]\large)^2-1 \\
&=& 1-1=0.
 \end{eqnarray*}
Hence, $\sum w_iM_i^*$ either converges or diverges to $+\infty$. Given that LHS in equation \ref{wkkl2} cannot be $-\infty$ and the other terms in RHS in \ref{wkkl2} converges, $\sum w_iM_i^*$ has to converge with probability one. 
 
Hence, we have $K_n^*$ converging to zero in a sub-sequence with probability one, and converging with probability one. Hence, $K_n^*$ converges to zero with probability one.

 We now argue that this implies $F_n$ converges weakly to $F$. Suppose not. Since $\Theta$ is compact, $F_n$ is tight and hence have convergent subsequence. Let $F_{n_k}$ be a subsequence that converges to ${\hat{F}} \ne F.$ Let ${\hat{m}}$ be the marginal corresponding to ${\hat{F}}$. Then by [B4],  $m_{n_k}$ converges pointwise to ${\hat{m}}$ and hence by Scheffe's theorem it converges  in $L_1$ and hence in Hellinger norm to ${\hat{m}}$. 
However, by the previous calculations,  $m_{n_k}$  converges to $m$ almost surely in Kullback Leibler distance and therefore in Hellinger norm, which is a contradiction as $\hat{F}\neq F$. 
Hence, $F_n$ converges to $F$ weakly in every subsequence. 
 \end{proof}

\subsection{\bf  Proof of Theorem \ref{mdependent}} 
\lb{proof_mdependent}
\textcolor{white}{line}
\begin{proof}
For $ j \leq q$, define $l_j(q,n) =  \max \{l:  j + q(l-1) \leq n\}$.
Also let  for each $j$ the subsequences $\{X_{j + q(l-1)}, l = 1, \ldots, l_j(q,n) \}$ be denoted by $\{X_{j,l} \}.$ By construction $\{X_{j,l}, l = 1,\ldots, l_j(q,n)\}$ are iid with marginal distribution $m(\cdot)$. Let $\mathcal{F}_{j,l} = \sigma \langle X_{1}, \ldots, X_{j,l} \rangle$ denote  the $\sigma-$field generated by all $X_i$'s up to and including $X_{j,l}$. Also let the marginals $m_{j + q(l-1)}$ generated during the iterations be denoted by $m_{j,l}$ and the weights $w_{j + q(l-1)}$ be denoted by $w_{j,l}$. 
From equation \eqref{kl1} 
\begin{eqnarray}
K_n - K_0=\sum_{j=1}^{{q}} S_{j, n} - \sum_{j=1}^{q} Q_{j, n} + \sum_{i = 1}^{n } w_i D_{i}+\sum_{i=1}^n E_i .
\label{q_dep_decomp}
\end{eqnarray}
Here
\begin{eqnarray*}
 S_{j,n}&=& \sum_{l=1}^{l_j(q,n)}  w_{j,l} V_{j,l},\\
 V_{j,l} &=&(1  - \frac{m(X_{j,l})}{m_{j+q(l-1)-1}(X_{j,l}) }  ) - E\big[(1  -\frac{m(X_{j,l})}{m_{j+q(l-1)-1}(X_{j,l}) }  )|\mathcal{F}_{j,l-1}\big], \\
 M_{j,l} &=& E[(-1 + \frac{m(X_{j,l})}{m_{j,(l-1)}(X_{j,l}) }) | \mathcal{F}_{j,l-1}];  l> 1\text{ and }\\
 M_{j,l} &=&E\big[(\frac{m(X_{j,l})}{m_{j+q(l-1)-1}(X_{j,l}) } -1 )|\mathcal{F}_{j,l-1}]; l=1,\\
 Q_{j,n}&=&  \sum_{l=1}^{l_j(q,n)}  w_{j,l} M_{j,l},\\
D_{i} &=& E[\frac{m(X_i)}{m_{i-1}(X_i)}(\frac{m_{i-1}(X_i)}{m_{i-q}(X_i)} - 1)|\mathcal{F}_{j,l-1}];i>q \text{ and } D_i=0, \text{ for } i\leq q,\\
E_i &=& \int R(X_i, \theta) f(\theta) \dmu, \\
\end{eqnarray*}
where $R(X_i, \theta)$ is defined as in \eqref{KLdiff}.  We follow the same argument as in the general case; first we show that $K_n$ converges with probability one and $K_n^*$ converges  to zero over some subsequence with probability one. Then we show convergence of $K_n^*$.  As before,  we show convergence of $\sum_{j=1}^{{q}} S_{j, n}$, the remainder term $\Delta_n = \sum_{i = 1}^{n } w_iD_{i}$ and the error term $T_n = \sum_{i=1}^n E_i$. We simplify some of the expressions first.
From \eqref{ntnmrg}, for $i>q$
\[ \frac{m_{i-1}(X_i)}{m_{i-q}(X_i)} = \prod_{j=1}^{q-1}[ 1 + w_{i-j}(\frac{\int  \pxit p(X_{i-j} | \theta) f_{i-j-1}(\theta)  \dmu }{m_{i-j-1}(X_i) m_{i-j-1}(X_{i-j})}-1)] .\]
We have
\begin{eqnarray}
|(\frac{\int  \pxit p(X_{i-j} | \theta) f_{i-j-1}(\theta)  \dmu }{m_{i-j-1}(X_i) m_{i-j-1}(X_{i-j})}-1)|  \leq 1+
A_1(X_{i-j})   
\label{jensen}
\end{eqnarray}
by using triangle inequality and using Proposition \ref{chaul_bd} on 
$\frac{p(X_{i-j} | \theta)}{m_{i-j-1}(X_{i-j})}$.  
By Holder's inequality and assumption A4, we have 
\[ E[\prod_{j=1}^r  A_1^2(X_{i-j})] \leq (c_un_Hb/a)^{2q} B \]
for $r = 1, \ldots, q-1.$ Thus
\[ E[|1 - \frac{m_{i-1}(X_i)}{m_{i-q}(X_i)} |^2]  \leq w_{i-q}3^q {(c_un_Hb/a)^{2q}}B.\]
Similarly, we can bound $E[|\frac{m(X_i)}{m_{i-1}(X_i)}|^2] < (c_un_Hb/a)^{2}B.$
By Cauchy-Schwartz we have $E[|w_iD_i|] \leq C w_{i-q}^2$ where $C > 0$ is a constant. Hence by Proposition~\ref{pos_sum},  $\sum_{i=1}^n w_iD_i$ converges almost surely.  Therefore, $\Delta_n \rightarrow \Delta_\infty$, a finite random variable,  with probability one.
Note that $E[|E_i|]\leq 2w_i^2\int E\big[\big((\frac{p(X_i|\theta)}{m_{i-1}(X_i)}-1)^2\big]$ for $i$ greater than some positive integer $i_0$, as $w_i<1/2$ for large enough $i$.
Applying  A4, we have $E[|E_i|]<2(2(c_un_Hb/a)^2B+2)w_i^2$ and hence $\sum_{i>i_0'} E[|E_i|]$ converges and hence by Proposition~\ref{pos_sum}, $T_n$ converges almost surely. 
 Similarly, $E[V_{j,l}^2]\leq 2(1+n_H^2B)$ and $S_{j,n}$ is a martingale with filtration  $F_{j,l_j(q,n)}$ and $ E[S_{j,n}^2]\leq \sum 2w_{j,l}^2(1+n_H^2B)<\infty$ and hence, $S_{j,n}$ converges almost surely to a finite random variable as it is an square integrable martingale.

From equation \eqref{q_dep_decomp}  we have convergence of $\sum_{j=1}^q Q_{j,n} \to Q_{\infty} < \infty$ with probability one. This statement holds as  in L.H.S in \eqref{q_dep_decomp}   is a fixed quantity subtracted from a positive quantity and $\sum_{j=1}^{{q}} S_{j, n}$, $ \sum_{i = 1}^{n } w_i D_{i}$ and $\sum_{i=1}^n E_i$ converges with probability one.  As  $Q_{j,n}=  \sum_{l=1}^{l_j(q,n)}  w_{j,l} M_{j,l}$ and $M_{j,l}> \int log \frac{m_{}(x)}{m_{i-q}(x)}m(x)\nu(dx)=K_{{i-q}}^*\geq 0$, for $i>q$. Hence, any   $Q_{j,n}$ can not diverge to infinity. Moreover, as $\sum_{i=1}^\infty w_{j+ iq}=\infty$ for $j=1,2,\dots,q-1$ , $M_{j,l}$ has to go zero in some subsequence almost surely.

Next we show convergence of $K_n^*$.
Analogously, replacing $V_i$ and $M_i$ by $V_i^*$ and $M_i^*$  from the above derivation, from equation \eqref{kl2}  we get 
\begin{eqnarray}
K_n^* - K_0^*=\sum_{j=1}^{{q}} S^*_{j, n} - \sum_{j=1}^{q} Q^*_{j, n} + \sum_{i = 1}^{n } w_i D^*_{i}+\sum_{i=1}^n E^*_i 
\lb{mdepmarg}
\end{eqnarray}
where  $S_{j,n}^*$ the martingale sequences, which converges due to squared integrability.
\begin{eqnarray*}
 S^*_{j,n}&=& \sum_{l=1}^{l_j(q,n)}  w_{j,l} V^*_{j,l},\\
V_i^*&=&V^*_{j,l}=(1-\int\frac {h_{i,X_i}(x)}{m_{i-1}(x)}m(x)\nu(dx))-\\
&&E [1-\int_{\chi}  \frac{h_{i,X_i}(x)}{m_{i-1}(x)}m(x)\nu(dx) |\mathcal{F}_{j,l-1}];i=j+q(l-1),\\
M_i^*&=&E [\int_{\chi}  \frac{h^{(q)}_{i,X_i}(x)}{m_{j,l-1}(x)}m(x)\nu(dx) -1|\mathcal{F}_{j,l-1}], i>q\text{ and }\\
 M_i^*&=& E [\int_{\chi}  \frac{h_{i,X_i}(x)}{m_{i-1}(x)}m(x)\nu(dx) -1|\mathcal{F}_{j,l-1}];i\leq q, \text{ for } i=j+q(l-1) \text{ and } \\
h^{(q)}_{i,x'}(x)&=&\frac{\int p(x'|\theta)p(x|\theta)f_{i-q}(\theta)d\mu \theta}{m_{i-q}(x')},  \\
Q_i^* & = &   \sum_{l=1}^{l_j(q,n)}  w_{j,l} M^*_{j,l}; M^*_{j,l}=M^*_i\text{ for } i=j+q(l-1),   \\  
D_i^* & = &E [\int_{\chi}  \frac{h^{(q)}_{i,x'}(x)}{m_{j,l-1}(x)}m(x)\nu(dx) -1|\mathcal{F}_{j,l-1}]-\\
  & &E [\int_{\chi}  \frac{h_{i,x'}(x)}{m_{i-1}(x)}m(x)\nu(dx) -1|\mathcal{F}_{j,l-1}]; i>q \text{ and } D^*_i=0;i\leq q,\\
R^*(X_i,x)&=&w_i^2(\frac{h_{i,X_i}(x)}{m_{i-1}(x)}-1)^2R(w_i[{h_{i,X_i}(x)}{m_{i-1}(x)}-1]),\\
 E_i^*&=&\int_{\chi} R(X_i,x)m(x)\nu(dx).
\end{eqnarray*}
Then for $i>q$,
\begin{eqnarray}
E[\int\frac {h_{i,X_i}^{(q)}(x)}{m_{i-q}(x)}m(x)\nu(dx)-1|F_{i-q}]= \int_{\chi} \int _{\chi}(\frac{h^{(q)}_{i,x'}(x)}{m_{i-q}(x)}m(x)m(x')\nu(dx)\nu(dx') -1) \nonumber \\
 =E_{f_{i-q}}[\large(\int  \frac{p(x|\theta)}{m_{i-q}(x)} m(x)\nu(dx)\large)^2]-1  \nonumber\\
\geq  \large( E_{f_{i-q}}[\int  \frac{p(x|\theta)}{m_{i-q}(x)} m(x)\nu(dx)]\large)^2-1 = 1-1=0.
 \end{eqnarray}
 Also $\sum_{i=1}^n E^*_i $ converges, using argument similar to that of  $\sum_{i=1}^nE_i$.  The proof of martingale squared integrability and the convergence of $S^*_{j,n}$ follows similarly as in $S_{j,n}$. The convergences of the difference term  $ \sum_{i = 1}^{n } w_i D^*_{i}$ follows similarly to the first part and given in the next subsection. Hence, we have $\sum_{j=1}^{q} Q^*_{j, n} $'s converging to finite quantities with probability one for each $j$,  as $K_n^*$ is positive. Hence, $K_n^*$ converges with probability one. 

From the fact $K_i^*=KL(m,m_i)$ converges almost surely as $i$ goes to infinity and it converges to zero in a subsequence almost surely, we have it converging to zero almost surely. Therefore, by arguments given in the proof of Theorem~\ref{wdependent2},   $F_n$ converges weakly to $F$  with probability one.
\end{proof}

\subsubsection{Convergence of $\sum_{i=1}^n E^*_i $ and $ \sum_{i = 1}^{n } w_i D^*_{i}$}

 \textcolor{white}{line}

The proof of martingale squared integrability and convergences of the reminder terms $\sum_{i=1}^n E^*_i $ and $ \sum_{i = 1}^{n } w_i D^*_{i}$ from Theorem~\ref{mdependent}:

\noindent \underline{\it Convergence of $S^*_{j,n}$:}
We show that $S^*_{j,n}$ is a square integrable martingale.  Note that $\frac{h_{i,X_i}(x)}{m_{i-1}(x)}\leq A_1(X_i)$.  From A4, $E[\big(\int \frac{h_{i,X_i}(x)}{m_{i-1}(x)}m(x)\nu(dx)\big)^2]<B$ using Holder's  inequality. Thus, 
we have $E\big[(S^*_{j,n})^2\big]\leq \sum 2w_i^2(B+2)<\infty$ which proves our claim.

 \noindent \underline{\it Convergence of $\sum E_i^*$:  }

Note that \[E[\sum_{i>i_0'}| E^*_i|]\preceq \sum w_i^2 E[(\int \frac{h_{i,X_i}(x)}{m_{i-1}(x)})^2 m(x)d\nu(x)+2]<\infty\]
from A4 and A1, and from the fact $w_i\downarrow 0$ and $w_i(\int_{\chi}  \frac{h^{(q)}_{i,X_i}(x)}{m_{j,l-1}(x)}m(x)\nu(dx) -1)>-w_i$ and    $E[A_1(X_i)^2]<\infty$ (following A4). Hence, $\sum_{i>i_0'}E_i^*$ and $\sum_{i}E_i^*$ converges with probability one.

\noindent \underline{\it Convergence of  $\sum w_i D_i^*$: }
Let,
\[ \Delta(X_i,x)=w_i|\frac{h_{i,X_i}^{(q)}(x)}{m_{i-q}(x)}-\frac{h_{i,X_i}(x)}{m_{i-1}(x)}|\leq w_i\Huge\{\int_{\theta}( |\frac{p(X_i|\theta)p(x|\theta)f_{i-q}(\theta)}{m_{i-1}(X_i)m_{i-1}(x)}(\frac{m_{i-1}(x)m_{i-1}(X_i)}{m_{i-q}(X_i)m_{i-q}(X_i)}-1)| d\mu (\theta)\] \[+\int_{\theta}|\frac{p(X_i|\theta)p(x|\theta)}{m_{i-1}(X_i)m_{i-1}(x)}{f_{i-q}(\theta)}(\frac{f_{i-1}(\theta)}{f_{i-q}(\theta)}-1)|)d\mu (\theta)\Huge\} \]
and $\int \Delta(X_i,x)m(x)\nu(dx)=\Delta(X_i)$.
From the fact 
\[ \frac{m_{i-1}(X_i)}{m_{i-q}(X_i)} = \prod_{j=1}^{q-1}[ 1 + w_{i-j}(\frac{\int  \pxit p(X_{i-j} | \theta) f_{i-j-1}(\theta)  \dmu }{m_{i-j-1}(X_i) m_{i-j-1}(X_{i-j})}-1)] \]
and we have 
\begin{eqnarray*}
|(\frac{\int  \pxit p(X_{i-j} | \theta) f_{i-j-1}(\theta)  \dmu }{m_{i-j-1}(X_i) m_{i-j-1}(X_{i-j})}-1)|  \leq 1+A_1(X_i).  \nonumber 
\end{eqnarray*} 
Similarly \[\frac{p(X_i|\theta)p(x|\theta)}{m_{i-1}(X_i)m_{i-1}(x)}\leq A_1(X_i) A_1(x).\]
The  part $|\frac{m_{i-1}(X_i)m_{i-1}(x)}{m_{i-q}(X_i)m_{i-q}(x)}-1|$ of R.H.S for  $\Delta(X_i,x)$ can be bounded by  the sum of $1\leq q' \leq 2q-2$ fold product of $ (1+A_1(X_i) )$  and $(1+A_1(x)) $'s multiplied by coefficient less than $w_{i-q}^{q'}$. Similarly. for $|\frac{f_{i-1}(\theta)}{f_{i-q}(\theta)}-1|$ we have  $1\leq q' \leq q-1$ fold products of $ 1+A_1(X_i) $'s.  Hence, integrating with respect to $m(x)$ and taking expectation we have a universal bound for any such product term from Holder's inequality. 
Hence, $E[\Delta(X_i)]\leq B_uw_{i-q}^2$ for $i>q$, for some universal constant $B_u>0$. 
As, $E\big(\sum_i |w_iD_i^*|\big)\preceq \sum_{i}w_{i}^2< \infty $, $\sum w_iD_i^*$ converges absolutely with probability one. Therefore, it converges with probability one.

\subsection{\bf Proof of Theorem \ref{mdependent22}}
\begin{proof}
From the derivation of equation \eqref{q_dep_decomp} using $\tilde{f}(\theta)$ instead of $f(\theta)$, i.e writing $\log\frac{f_i}{f_{i-1}}=\log\frac{f_i}{\tilde{f}}-\log\frac{f_{i-1}}{\tilde{f}}$, we have, for $\tilde{K}_n=KL(\tilde{f},f_n)$, 
\begin{eqnarray}
\tilde{K}_n - \tilde{K}_0=\sum_{j=1}^{{q}} \tilde{S}_{j, n} - \sum_{j=1}^{q} \tilde{Q}_{j, n} + \sum_{i = 1}^{n } w_i \tilde{D}_{i}+\sum_{i=1}^n \tilde{E}_i .
\label{q_dep_decomp22}
\end{eqnarray}
Here
\begin{eqnarray*}
 \tilde{S}_{j,n}&=& \sum_{l=1}^{l_j(q,n)}  w_{j,l} \tilde{V}_{j,l},\\
 \tilde{V}_{j,l} &=&(1  - \frac{\tilde{m}(X_{j,l})}{m_{j+q(l-1)-1}(X_{j,l}) }  ) - E\big((1  - \frac{\tilde{m}(X_{j,l})}{m_{j+q(l-1)-1}(X_{j,l}) }  )|\mathcal{F}_{j,l-1}\big), \\
 \tilde{M}_{j,l} &=& E((-1 + \frac{\tilde{m}(X_{j,l})}{m_{j,l-1}(X_{j,l}) }) | \mathcal{F}_{j,l-1}); l>1 \\
 \text{ and } \tilde{M}_{j,l} &=& E((-1 + \frac{\tilde{m}(X_{j,l})}{m_{j+q(l-1)-1}(X_{j,l}) }) | \mathcal{F}_{j,l-1})\text{ for }l=1\\
\tilde{Q}_{j,n}&=&  \sum_{l=1}^{l_j(q,n)}  w_{j,l} \tilde{M}_{j,l},\\
\tilde{D}_{i} &=& \frac{\tilde{m}(X_i)}{m_{i-1}(X_i)}(\frac{m_{i-1}(X_i)}{m_{i-q}(X_i)} - 1);i>q, \text { and } \tilde{D}_{i}=0  \text{ for } i\leq q,\\
\tilde{E}_i &=& \int R(X_i, \theta) \tilde{f}(\theta) \dmu, \\
\end{eqnarray*}
For $i>q$, $\tilde{M}_{j,l} \geq \int \log\frac{\tilde{m}(x)}{m_{j,l-1}(x)}m(x)\nu(dx)=KL(m,m_{j,l-1})-KL(m,\tilde{m})=KL(m,m_{j,l-1})-\tilde{k}>0$. Convergence of $\tilde{S}_{j,n}$, $\sum \tilde{D}_{i}$, $\sum \tilde{E}_{i}$ follows from the proof of \ref{mdependent}.  Thus, similarly  each $\tilde{Q}_{j,n}$ has to converge and hence, $KL(m,m_{j,l-1})-\tilde{k}$ converges to zero in some subsequence with probability one.  Convergence of $KL(m,m_i)$ follows from the proof of Theorem \ref{mdependent}, and completes the proof.

\end{proof}

\subsection{\bf Proof of Theorem \ref{wdependent22}}
\begin{proof}
From the proof of Theorem \ref{wdependent}, we decompose   $\log\frac{f_i}{f_{i-1}}=\log\frac{f_i}{\tilde{f}}-\log\frac{f_{i-1}}{\tilde{f}}$, and have an equation  analogous to  \eqref{wkkl1}
\begin{eqnarray}
\tilde{K}_n-\tilde{K}_0=\sum_{j=1}^{{L_n}} \tilde{S}_{v,j}^{(n)}-\sum_{j=1}^{L_n} \tilde{S}_{M,j}^{(n)}+\sum_{i=1}^n\tilde{S}^{\Delta}_i+\sum_{i=1}^n \tilde{E}_i 
\lb{wkkl11}
\end{eqnarray}
which is derived by using $\tilde{f}(\theta)$ instead of $f(\theta)$.
Analogous to equation \eqref{wdep_decmp} we have,
\begin{eqnarray*}
\tilde{M}_{\tilde{i}}= \int(\frac{\tilde{m}(x)}{{m}_{{i}_{last}}(x)}-1)m(x)\nu(dx)+\int(\frac{\tilde{m}(x)}{{m}_{{i}_{last}}(x)}-1).(\frac{p_{{i},last}(x)}{m(x)}-1)m(x)\nu(dx). 
\end{eqnarray*}
Using argument similar to Theorem \ref{mdependent22}, $KL(m,{m}_i)-KL(m,\tilde{m})$ goes to zero in some subsequence with probability one, as $j$ goes to infinity. Convergence of $KL(m,m_i)$ is essentially same proof as in Theorem~\ref{wdependent2}. Together they complete the proof. 
\end{proof}

\subsection{\bf Proof of Theorem \ref{rate_thm1}}

\begin{proof}

Since $\frac{w_{i-1}}{w_i}=O(1)$, from the proof of $\ref{mdependent}$ we can conclude that  $c_1' \sum w_{i}\tilde{K}^*_{i-q}\leq\sum_i w_{i-q+1}\tilde{K}^*_{i-q}\leq c' \sum w_{i}\tilde{K}^*_{i-q}<\infty$ with $c'>0,c'_1>0, i=q+jl, l=1,2,\dots$ converges  with probability one.

Let $a_n=\sum_{i=1}^nw_i$, $a_0=0$. Hence, for $i=j+ql$ for  Theorem \ref{mdependent22}
\begin{eqnarray}
a_i\tilde{K}_i^*=a_{i-1}\tilde{K}_{i-1}^*+w_i\tilde{K}^*_{i-1}+a_iw_iV_{j,l}^*-a_iw_iM^*_{j,l}+a_iw_iD_i^*+a_iE_i^*\nonumber \\
\label{ratemart1}
\end{eqnarray}
and 
\begin{eqnarray}
a_n\tilde{K}_n^*-a_0\tilde{K}^*_0=\sum w_i\tilde{K}^*_{i-1}+\sum_{j=1}^q \sum_{l=1}^{l_j(q,n)} a_iw_iV_{j,l}^*-\sum_{j=1}^q \sum_{l=1}^{l_j(q,n)}a_iw_iM^*_{j,l}+\nonumber \\ \sum_{i=1}^n a_iw_iD_i^*+\sum_{i=1}^n a_iE_i^*.
\label{ratemart2}
\end{eqnarray}
Let $w_i=i^{-\alpha}, \alpha \in (3/4,1)$ and $a_i \sim i^{-(\alpha-1)}$,  $w_i'=a_iw_i\sim i^{-(2\alpha-1)}.$
For $M$ dependent case, from the proof of Theorem \ref{mdependent}, writing   $w_i'$ instead of $w_i$  we can  show the almost sure convergence $S^*_{j,l}$. Convergence of $\sum a_iw_iD_i^*$ and $\sum_ia_iE_i^*$ also follow  in similar fashion. From the fact $\frac{w_{i-1}}{w_i}=O(1)$ the term $\sum_iw_i\tilde{K}^*_{i-1}$ converges with probability one. Hence  following  A1, we have $ \sum_{j=1}^q \sum_{l=1}^{l_j(q,n)}a_iw_iM^*_{j,l}$ converging and   $M^*_{j,l}$ going to zero in a subsequence  with probability one for all $j$ as $l$ goes to infinity, and $a_n\tilde{K}_n^*$ converging with probability one. 

We have,   $a_n\tilde{K}^*_n$ converging to a finite number  for each $\omega$ outside a set of probability zero, and $a_n \sim n^{1-\alpha}$, then $n^{1-\gamma}\tilde{K}_n^*$ goes to zero with probability one for $\gamma>\alpha$.    The conclusion about the Hellinger distance follows from the relation between KL and Hellinger distance from argument as in Theorem \ref{rate_thm2} proof. 

\end{proof}

\bibliography{References}

\end{document}